\documentclass[11pt]{amsart}
\usepackage{amssymb}
\usepackage{latexsym,bm}

\numberwithin{equation}{section} \theoremstyle{section}
\newtheorem{De}[equation]{Definition}

\theoremstyle{plain}

\newtheorem{Prop}[equation]{Proposition}
\newtheorem{Theorem}[equation]{Theorem}

\newtheorem{Lemma}[equation]{Lemma}
\newtheorem{Cor}[equation]{Corollary}

\pagestyle{myheadings}

\title{Crossed products by {\Large $\alpha$}-simple  automorphisms
on
$C$*-algebras C(X,A)}

\author{JIAJIE HUA}

\date{}

\begin{document}

\maketitle \markboth{JIAJIE HUA}{CROSSED PRODUCTS BY {\large
$\alpha$}-SIMPLE AUTOMORPHISMS ON $C$*-ALGEBRAS}
\renewcommand{\thefootnote}{{}}
\footnote{\hspace{-14pt} {\it 2000 Mathematics Subject Classification}. Primary 46L55: Secondary 46L35, 46L40.\\
{\it Key words and phrases}. $\alpha$-simple; crossed products;
tracial rank zero.\\ The author was supported the National
Natural Science Foundation of China (Nos. 10771069, 10671068,10771161).}
\begin{abstract}  Let $X$ be a Cantor set, and let $A$ be a
unital separable simple amenable $C$*-algebra with tracial rank zero which
satisfies the Universal Coefficient Theorem, we use $C(X,A)$ to denote the
set of all continuous functions from $X$ to $A$, let $\alpha$ be
an automorphism on $C(X,A)$. Suppose that $C(X,A)$ is
$\alpha$-simple and $[\alpha]=[\mbox{id}_{1\otimes A}]$ in $KL(1\otimes A,1\otimes A)$, we show that $C(X,A)\rtimes_{\alpha}\mathbb{Z}$ has tracial rank zero.

\end{abstract}

\section{Introduction}
Transformation group $C$*-algebras of minimal homeomorphisms of the
Cantor set are AT algebras (direct limits of circle algebras) with
real rank zero is implicit in Section 8 of \cite{R.Herman1}, with
the main step having been done in \cite{Putnam}. Elliott and Evans
proved in \cite{Elliott} that the irrational rotation algebras are
AT algebras with real rank zero, then Lin proved   that irrational higher dimensional noncommutative tori of
the form $C(T^k)\rtimes_\theta \mathbb{Z}$ are in fact AT algebras (\cite{H.Lin11}). Recently Phillips constructs
an inductive proof that every simple higher noncommutative torus is an AT algebra (\cite{Phillips1}). More generally,  Lin and Phillips proved the following result (\cite{H.Lin5}) : Let $X$ be an infinite compact metric space
with finite covering dimensional and let $\alpha$: $X\rightarrow X$
be a minimal homeomorphism,  the associated crossed product
$C$*-algebra $A=C(X)\rtimes_{\alpha}\mathbb{Z}$ has tracial rank
zero whenever the image of $K_0(A)$ in Aff($T(A)$) is dense.
In particular, these algebras all
belong to the class known currently to be classifiable by
K-theoretic invariants in the sense of the Elliott classification
program (\cite{Elliott1}).

 On the other hand,  Lin proved in \cite{H.Lin3} that let $A$ be a unital separable
simple $C$*-algebra with tracial rank zero and let $\alpha$ be an
automorphism on $A$.  Suppose that $\alpha$ has certain
Rokhlin property and there is an integer $J\geq1$
such that $[\alpha^J]=[\mbox{id}_A]$ in $KL(A,A)$. Then
$A\rtimes_{\alpha}\mathbb{Z}$ has tracial rank zero.

 In present, we don't know what happen when $C$*-algebras are nor commutative neither simple.
 In this paper, let $X$ be a Cantor set, let $A$ be a
unital separable simple amenable $C$*-algebra with tracial rank zero which
satisfies the Universal Coefficient Theorem, we consider the $C$*-algebra $C(X,A),$  all
continuous functions from $X$ to $A$. When $A$ is isomorphic to $\mathbb{C},$ it is just the case in \cite{Putnam}. When $C(X,A)$
 is not isomorphic to $\mathbb{C},$ $C(X,A)$ is
neither commutative nor simple, so it is different from above
 cases and contains Cantor set case. We prove the following result: let $X$ be a Cantor set, let $A$ be a
unital separable simple amenable $C$*-algebra with tracial rank zero which
satisfies the Universal Coefficient Theorem, and let $\alpha$ be an
automorphism on $C(X,A)$. Suppose that $C(X,A)$ is
$\alpha$-simple and $[\alpha]=[\mbox{id}_{1\otimes A}]$ in $KL(1\otimes A,1\otimes A)$, we present
a proof that $C(X,A)\rtimes_{\alpha}\mathbb{Z}$ has tracial rank
zero, therefore they are determined by their
graded ordered K-theory.

The condition $C(X,A)$ is $\alpha$-simple is necessary to guarantee that $C(X,A)\rtimes_{\alpha}\mathbb{Z}$ is simple, because in present classification theorems we have is mainly for simple $C$*-algebras. The second condition $[\alpha]=[\mbox{id}_{1\otimes A}]$ in $KL(1\otimes A,1\otimes A)$ means action of $\alpha$ take K-theory of each fiber of $X$ invariant. An early version of second condition is weaker and can not ensure approximate unitary equivalence of the action on fiber which was pointed us by Hiroki Matui.

This paper is organized as follows. In Section 2 we introduce
notation and give some elementary properties of $\alpha$-simple
automorphisms on $C$*-algebras $C(X,A)$. In Section 3, we prove
that, under our hypotheses, $C(X,A)\rtimes_{\alpha}\mathbb{Z}$  has
tracial rank zero.

\section{Notation and $\alpha$-simple {\footnotesize $C$}*-algebras}

We will use the following convention:

(1) Let $A$ be a $C$*-algebra, let $a\in A$ be a positive element
and let $p\in A$ be a projection. We write $[p]\leq [a]$ if there is
a projection $q\in \overline{aAa}$ and a partial isometry $v\in A$
such that $v^{*}v=p$ and $vv^{*}=q.$

(2) Let $A$ be a $C$*-algebra. We denote by Aut$(A)$ the
automorphism group of $A$. If $A$ is unital and $u\in A$ is a
unitary, we denote by ad$u$ the inner automorphism defined by
ad$u(a)=u^{*}au$ for all $a\in A.$

(3) Let $x\in A$, $\varepsilon>0$ and $\mathcal{F}\subset A.$ We
write $x\in_{\varepsilon}\mathcal{F},$ if
dist$(x,\mathcal{F})<\varepsilon$, or there is $y\in \mathcal{F}$
such that $\|x-y\|<\varepsilon.$

(4) Let $A$ be a $C$*-algebra and $\alpha\in$ Aut$(A)$. We say $A$
is $\alpha$-simple if $A$ does not have any non-trivial
$\alpha$-invariant closed two-sided ideals.

(5) Let $A$ be a unital $C$*-algebra and $T(A)$ the compact convex
set of tracial states of $A$. If $\alpha$ is an automorphism of $A$, we use
$T^{\alpha}(A)$ to denote the $\alpha$-invariant tracial states, which
is again a compact convex set. We define an affine mapping $r$ of
$T(A \rtimes_{\alpha}\mathbb{Z})$ into $T^{\alpha}(A)$ by the
restriction $r(\tau)=\tau|A.$

We recall the definition of tracial topological rank of
$C$*-algebras.
\begin{De} \cite{H.Lin1} Let $A$ be a unital simple $C$*-algebra.
Then $A$ is said to have tracial (topological) rank zero if for any $\varepsilon>0$, any
finite set $\mathcal{F}\subset A$  and any
nonzero positive element $a\in A,$ there exists a finite dimensional
$C$*-subalgebra $B\subset A$ with id$_{B}=p$ such
that:\\
(1) $\|px-xp\|<\varepsilon$ for all $x\in \mathcal{F}$.\\
(2) $pap\in_{\varepsilon} B$ for all $x\in \mathcal{F}$.\\
(3) $[1-p]\leq [a].$
\end{De}

If $A$ has tracial rank zero, we write $\mathrm{TR}(A)=0.$

\begin{De} Let $X$ be a compact metric space and let $A$ be a
 $C$*-algebra, we say a map $\beta:X\rightarrow$ Aut$(A)$
is strongly continuous if for any $\{x_n\}$ with $d(x_n,x)\rightarrow 0$ when $n\rightarrow\infty$, we have $\|\beta_{x_n}(a)-\beta_x(a)\|\rightarrow
0$ for all $a\in A.$

\end{De}

\begin{Lemma}Let $X$ be a compact metric space, let $A$ be a
unital simple $C$*-algebra and $\alpha\in$Aut$(C(X,A))$. Then
$C(X,A)$ is $\alpha$-simple if and only if there is a minimal
homeomorphism $\sigma$ from $X$ to $X$ and there is a strongly
continuous map $\beta$ from $X$ to Aut$(A)$, denote by $x$ to $\beta_x$, such
that $\alpha(f)(x)=\beta_{\sigma^{-1}(x)}(f(\sigma^{-1}(x)))$.
\end{Lemma}
\begin{proof}
We firstly prove that the set consists of  all ideals  of $C(X,A)$
is same as the one consists of $\{f\in C(X,A)| f(x)=0  \mbox{ for
any } x\in Y\}$ for all closed subset $Y\subset X.$

 Clearly, $\{f\in C(X,A)| f(x)=0  \mbox{ for
any } x\in Y\}$ is an ideal of $C(X,A)$ for any closed subset
$Y\subset X.$

Conversely, let $I$ be an ideal of $C(X,A)$, and let $X_I=\cap_{f\in
I}\{x\in X |f(x)=0\}$ be a closed subset of $X$, We claim $I=\{f\in
C(X,A)| f(x)=0 \mbox{ for any } x\in X_I\}.$ Clearly,
$I\subseteq\{f\in C(X,A)| f(x)=0 \mbox{ for any } x\in X_I\}.$

In the following we will prove $\{f\in C(X,A)| f(x)=0 \mbox{ for any } x\in
X_I\}\subseteq I.$

For any $\varepsilon>0$ and  any $x_0\notin X_I$, there is a $f\in I$
such that $f(x_0)\neq 0$, thus for any $h\in \{f\in C(X,A)| f(x)=0 \mbox{ for any } x\in X_I\}$,
 since $A$ is a unital simple $C$*-algebra, by Lemma 3.3.6 of \cite{H.Lin2}, there
 exists $a_i\in A,i=1,\dots,k$ such that $h(x_0)=\sum_{i=1}^{k}a_i
 f(x_0)a_i^*$. Let $g(x)=\sum_{i=1}^{k}a_i
 f(x)a_i^*$, since $f(x)\in I,$ we have $g(x)\in I,$
 then  there is a neighborhood $O(x_0)$ of
 $x_0$ such that $\|g(y)-h(y)\|< \varepsilon$ for all $y\in O(x_0)$. Since $X$ is compact,
 there exist $g_i(x)\in I$ and $x_i\in X,i=1,\cdots,n,$ such that $\{O(x_i)\}_{i=1}^n$ cover
 $X$ and $\|g_i(x)-h(x)\|<\varepsilon$ for all $x\in O(x_i)$.

 Let $\{f_1,\cdots,f_n\}$ be a subset of nonnegative functions
 in $C(X)$(a partition of the unit) satisfying the following
 conditions:

 (1) $f_i(x)=0$ for all $x\notin O(x_i), i=1,2,\cdots,n$ and

 (2) $\sum_{i=1}^n f_i(x)=1$ for all $x\in X.$

 We compute
 $$\|\sum_{i=1}^n g_i(x)
 f_i(x)-h(x)\|\leq\sum_{i=1}^n\|g_i(x)-h(x)\||f_i(x)|
 <\varepsilon,$$
 then we have $h\in I$, so $\{f\in C(X,A)| f(x)=0 \mbox{ for any } x\in X_I\}\subseteq
 I.$ The claim follows.

From above, we know that the set of all
the maximal ideals of $C(X,A)$ is  $\{I_{x}\}_{x\in X},$ where
$I_{x}=\{f\in C(X,A) | f(x)=0\}.$  Given  $x\in X$, because $\alpha$
is an automorphism, there is $y\in X$ such that $\alpha(I_x)=I_y.$
We define  $\sigma: X\rightarrow X$ by $x$ to $y$ and
$\beta_{\sigma(x)}:A\rightarrow A$ by
$\alpha(C(X,A)/I_{x})=C(X,A)/I_{\sigma{(x)}},
\beta_{x}(a)=\alpha(a)(\sigma(x)).$ For any fixed $y\in X$ and $f\in
C(X,A),$ then
$\alpha(f)(y)=\alpha((f-f(\sigma^{-1}(y)))+f(\sigma^{-1}(y)))(y)=\alpha(f(\sigma^{-1}(y)))(y).$

So $\alpha(f){(y)}=\beta_{\sigma^{-1}(y)}(f(\sigma^{-1}(y)))$ for
all $y\in X.$

For any $a\in A$, if $d(x_n-x)\rightarrow 0$ when $n\rightarrow
\infty$, we have
$\|\beta_{x_n}(a)-\beta_x(a)\|=\|\alpha(a)(\sigma(x_n))-\alpha(a)(\sigma(x))\|\rightarrow
0$ when $n\rightarrow \infty$.

 For any $a,b\in A$ and $x\in X$
$$\beta_{x}(a+b)=\alpha(a+b)(\sigma(x))=\alpha(a)(\sigma(x))+\alpha(b)(\sigma(x))=\beta_{x}(a)+\beta_{x}(b),$$
$$\beta_{x}(a\cdot b)=\alpha(a\cdot b)(\sigma(x))=\alpha(a)(\sigma(x))\cdot\alpha(b)(\sigma(x))=\beta_{x}(a)\cdot\beta_{x}(b),$$
$$\beta_{x}(a^*)=\alpha(a^*)(\sigma(x))=\alpha(a)^*(\sigma(x))=\beta_{x}(a)^*,$$
so $\beta_{x}$ is a homomorphism,

Since
$$\beta_{x}(\alpha^{-1}(b)(x))=\beta_{x}((\alpha^{-1}(b)(x)-\alpha^{-1}(b))+\alpha^{-1}(b))=\alpha(\alpha^{-1}(b))(\sigma(x))=b$$
and $A$ is simple, we have $\beta_{x}$ is an automorphism of $A$ for
any $x\in X$.

If there exist $x_1,x_2\in X$, $x_1\neq x_2$ such that
$\sigma(x_1)=\sigma(x_2),$ it is easy to find $f\in C(X,A)$ such
that $f(x_1)=1,f(x_2)=0.$ So
$$\alpha(f)(\sigma(x_1))=\beta_{x_1}(f(x_1))=1$$
$$\alpha(f)(\sigma(x_2))=\beta_{x_2}(f(x_2))=0$$
this is impossible, so $\sigma$ is injective.

By considering $\alpha^{-1},$ we can also get $\sigma$ is
surjective.

When $n\rightarrow \infty,$ if $d(x_n,x)\rightarrow 0,$ then for any
$f\in C(X,A),$
\begin{eqnarray*}
&&\|f(\sigma(x_{n}))-f(\sigma(x))\|\\&=&\|\alpha(\alpha^{-1}(f))(\sigma(x_n))-\alpha(\alpha^{-1}(f))(\sigma(x))\|\\
&=&\|\beta_{x_n}(\alpha^{-1}(f)(x_n))-\beta_{x}(\alpha^{-1}(f)(x))\|\\&=&\|\beta_{x_n}(\alpha^{-1}(f)(x_n)-\alpha^{-1}(f)(x))+
\beta_{x_n}(\alpha^{-1}(f)(x))-\beta_{x}(\alpha^{-1}(f)(x))\|\\
&&\rightarrow 0.\end{eqnarray*} so $\sigma$ is continuous, we can
also get $\sigma^{-1}$ is continuous by considering $\alpha^{-1},$
we omit it. so $\sigma$ is a homeomorphism.

Since $C(X,A)$ is $\alpha$-simple, it does not have any non-trivial
$\alpha$-invariant closed two-sided ideals of $C(X,A)$, then we have
not any non-trivial $\sigma$-invariant closed subset of $X$, so
$\sigma$ is a minimal homeomorphism of $X$.

Conversely, if $\sigma$ is a minimal homeomorphism of $X$ and
$\beta: X \rightarrow Aut(A)$ is a strongly continuous map, define
$\alpha(f){(y)}=\beta_{\sigma^{-1}(y)}f(\sigma^{-1}(y))$ for all
$y\in X,$ it is easy to verify $\alpha\in Aut(C(X,A))$ and $C(X,A)$ is
$\alpha$-simple.

\end{proof}

\begin{Lemma}Let $X$ be an infinite compact metric space, let $A$ be a
unital simple $C$*-algebra and  $\alpha\in$Aut$(C(X,A))$. Then
$C(X,A)$ is $\alpha$-simple if and only if the crossed product
$C(X,A)\rtimes_{\alpha} \mathbb{Z}$ is simple.
\end{Lemma}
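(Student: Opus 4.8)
The plan is to lean entirely on the structure theorem just proved: by Lemma 2.4, $\alpha$-simplicity of $C(X,A)$ is equivalent to the existence of a minimal homeomorphism $\sigma$ of $X$ together with a strongly continuous family $\beta_x\in\mathrm{Aut}(A)$ such that $\alpha(f)(x)=\beta_{\sigma^{-1}(x)}(f(\sigma^{-1}(x)))$. I would prove the two implications separately, with minimality of $\sigma$ as the geometric engine. Throughout, write $B=C(X,A)$, let $u$ be the canonical unitary so that $\alpha=\mathrm{ad}\,u$ on $B$, and let $E:B\rtimes_\alpha\mathbb{Z}\to B$ be the canonical faithful conditional expectation.

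For ``$B\rtimes_\alpha\mathbb{Z}$ simple $\Rightarrow$ $B$ is $\alpha$-simple'' I would argue by contraposition, and this direction is routine. If $B$ fails to be $\alpha$-simple, pick a proper non-zero $\alpha$-invariant closed two-sided ideal $I$. Since $\mathbb{Z}$ is amenable there is a short exact sequence $0\to I\rtimes_\alpha\mathbb{Z}\to B\rtimes_\alpha\mathbb{Z}\to (B/I)\rtimes_{\bar\alpha}\mathbb{Z}\to 0$, which exhibits $I\rtimes_\alpha\mathbb{Z}$ as a non-zero proper ideal; hence the crossed product is not simple.

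The substantial direction is ``$B$ $\alpha$-simple $\Rightarrow$ $B\rtimes_\alpha\mathbb{Z}$ simple.'' First I would record that since $X$ is infinite and $\sigma$ is minimal, $\sigma$ has no periodic points: a periodic orbit would be a finite closed $\sigma$-invariant set, forcing $X$ to be finite. Now let $J$ be any non-zero closed two-sided ideal; the crux is to show $J\cap B\neq\{0\}$. I would take $0\neq z=z^*\in J$ normalized so that $\|E(z)\|=1$, approximate $z$ by a finite Fourier sum $\sum_{|n|\le N}a_n u^n$ with $a_0=E(z)$, and then exploit the absence of periodic points to build a positive scalar function $f\in C(X)\cdot 1_A\subseteq B$, supported near a point where $\|E(z)\|$ is nearly attained, with the property that $f\cdot(f\circ\sigma^{-n})=0$ for $1\le|n|\le N$. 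Because scalar functions are central in $B$ and $\alpha^n(f)=(f\circ\sigma^{-n})\,1_A$, compressing gives $f a_n u^n f=a_n\,(f\cdot(f\circ\sigma^{-n}))\,u^n=0$ for $n\neq 0$, so $fzf$ lies within a controlled error of the non-zero element $fE(z)f\in B$. A standard perturbation/closedness argument (the topological-freeness ``intersection property'' of Kishimoto) then upgrades these approximations to a genuine non-zero element of $J\cap B$; alternatively one may invoke Kishimoto's theorem that a properly outer action on a $B$ with no invariant ideals has simple crossed product, proper outerness being guaranteed here by freeness of $\sigma$.

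Finally $J\cap B$ is a closed two-sided ideal of $B$ which is $\alpha$-invariant, since $\alpha(J\cap B)=u(J\cap B)u^*\subseteq J\cap B$ and likewise for $\alpha^{-1}$. By $\alpha$-simplicity it must equal $B$, so $1\in J$ and $J=B\rtimes_\alpha\mathbb{Z}$, giving simplicity. I expect the main obstacle to be the compression step of the previous paragraph: constructing $f$ with disjointly supported translates and, crucially, controlling the approximation error uniformly over the finitely many translates as $N\to\infty$ and $\mathrm{supp}\,f$ shrinks. Everything surrounding that estimate—the reduction via $E$, the $\alpha$-invariance of $J\cap B$, and the final appeal to $\alpha$-simplicity—is formal.
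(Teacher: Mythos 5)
Your proof is correct and follows essentially the same route as the paper: the easy direction via the invariant ideal $I\rtimes_\alpha\mathbb{Z}$ (the paper cites Jang--Lee), and the hard direction via Kishimoto's intersection property, established exactly as in the paper's condition $(*)$ by a scalar bump function supported on a set whose first $N$ translates under the free minimal homeomorphism $\sigma$ are disjoint. The paper likewise defers the final perturbation step to Theorem 3.1 of Kishimoto, so there is no substantive difference.
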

\begin{proof} Let $I$ be an $\alpha$-invariant norm closed
two-sided ideal of $C(X,A)$. Then $I\rtimes_{\alpha}\mathbb{Z}$ is a
norm closed two-sided ideal of $C(X,A)\rtimes_{\alpha} \mathbb{Z}$
by Lemma 1 of \cite{Jang}.

Conversely, for any positive element $f$ of the $C$*-algebra
$C(X,A)$, any finite set $\mathcal{F}=\{f_{i};i=1,2,\cdots.n\}
\subset C(X,A)$, any $s_{i}\subset \mathbb{N},i=1,2,\cdots,n$, and
any $\varepsilon>0,$ we claim that  there exists a positive element
$g\in C(X,A)$ with $\|g\|=1$ such that
$$\|gfg\|\geq\|f\|-\varepsilon, \quad
\|gf_{i}\alpha^{s_{i}}(g)\|\leq \varepsilon, \quad
i=1,2,\cdots,n.\quad\quad (*)$$

Because $X$ is a compact set, we can get a point $x\in X$ such that
$\|f(x)\|=\|f\|.$ By Lemma 2.3  we  get a minimal homeomorphism $\sigma$ of X
. Since $\sigma$ is the minimal homeomorphism of $X$,
there exists a neighborhood $O(x)$ of $x$ such that
$\sigma^{i}(O(x))$ are disjoint for $i=1,\cdots,s,$ where
$s=\max\{s_{i},i=1,2,\cdots,n\}$ and $\|f(y)-f(x)\|<\varepsilon$ for
all $y\in O(x).$ It is easy to find a continuous function $g$ of $X$
to $[0,1]$ such that $g(x)=1$ and $g(z)=0$ for all $z\notin O(x).$
Then $g$ satisfies the conditions $(*)$. The claim follows.

Use the condition $(*)$ and $C(X,A)$ is $\alpha$-simple, we can
complete the proof as same as Theorem 3.1 of \cite{A.Kishimoto1}, we
omit it.
\end{proof}

Let $u$ be the unitary implementing the action of $\alpha$ in the
transformation group $C$*-algebra
$C(X,A)\rtimes_{\alpha}\mathbb{Z}$, then $ufu^*=\alpha(f).$ For a
nonempty closed subset $Y\subset X,$ we define the $C$*-subalgebra
$B_Y$ to be
$$B_Y=C^*(C(X,A),uC_0(X\backslash Y,A))\subset
C(X,A)\rtimes_{\alpha}\mathbb{Z}.$$ We will often let $B$ denote the
transformation group $C$*-algebra
$C(X,A)\rtimes_{\alpha}\mathbb{Z}$. If $Y_{1}\supset
Y_2\supset\cdots$ is a decreasing sequence of closed subsets of $X$
with $\cap_{n=1}^{\infty}Y_n=\{y\},$ then $B_{\{y\}}=\lim
B_{Y_{n}}.$

Let $Y\subset X$, and let $x\in Y.$ If $C(X,A)$ is $\alpha$-simple, by Lemma 2.3 we have a minimal homeomorphism $\sigma$ of $X.$ The first return time
$\lambda_Y(x)$ (or $\lambda(x)$ if $Y$ is understood) of $x$ to $Y$
is the smallest integer $n\geq 1$ such that $\sigma^n(x)\in Y$.

The following result is well known in the area, and is easily
proved:

 \begin{Lemma}If
$Y$ is a nonempty clopen subset and $\sigma$ is a minimal
homeomorphism of $X$. Then $\sup_{x\in Y}\{\lambda_Y(x)\}<\infty.$
\end{Lemma}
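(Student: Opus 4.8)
The plan is to convert finiteness of the return time into a statement about a decreasing family of clopen sets and then invoke compactness. First I would record that $\lambda_Y(x)$ is well defined for every $x\in Y$: since $Y$ is open and nonempty and $\sigma$ is minimal, the forward orbit $\{\sigma^k(x):k\ge 1\}$ is dense in $X$, hence meets $Y$, so there is a smallest $n\ge 1$ with $\sigma^n(x)\in Y$. The one point that requires care is that minimality is usually phrased via invariant closed sets, so I would first check that the forward semiorbit is dense: if $F=\overline{\{\sigma^k(x):k\ge 0\}}$ then $\sigma(F)\subseteq F$, the nested intersection $K=\bigcap_{n\ge 0}\sigma^n(F)$ is a nonempty closed $\sigma$-invariant set, and minimality forces $K=X$, whence $F=X$.

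For the uniform bound, I would introduce the tail sets
\[
W_n=\{x\in Y:\sigma^k(x)\notin Y\ \text{for}\ 1\le k\le n\}=Y\cap\bigcap_{k=1}^{n}\sigma^{-k}(X\setminus Y).
\]
Because $Y$ is clopen and $\sigma$ is a homeomorphism, each $\sigma^{-k}(X\setminus Y)$ is clopen, so every $W_n$ is clopen, hence closed, and clearly $W_1\supseteq W_2\supseteq\cdots$. The key observation is that $\bigcap_{n\ge 1}W_n=\emptyset$: a point in this intersection would satisfy $\sigma^k(x)\notin Y$ for all $k\ge 1$, contradicting the density of the forward orbit established above.

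Finally I would apply compactness. The $W_n$ form a decreasing sequence of closed subsets of the compact space $X$ with empty intersection, so by the finite intersection property there is some $N$ with $W_N=\emptyset$. Unwinding the definition, $W_N=\emptyset$ says exactly that every $x\in Y$ satisfies $\sigma^k(x)\in Y$ for some $1\le k\le N$, i.e. $\lambda_Y(x)\le N$; hence $\sup_{x\in Y}\lambda_Y(x)\le N<\infty$.

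The main obstacle I anticipate is the first step, namely verifying that minimality yields density of the forward semiorbit rather than merely of the full orbit; once that is in hand the remainder is a routine clopen-plus-compactness argument. An alternative that sidesteps even this subtlety is to write $Y$ as the disjoint union of the clopen sets $U_n=\{x\in Y:\lambda_Y(x)=n\}$ and to use that a compact space cannot be covered by infinitely many pairwise disjoint nonempty open sets, which immediately bounds the index $n$ that occurs.
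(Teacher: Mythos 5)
Your proof is correct. The paper itself offers no argument for this lemma (it is dismissed as ``well known in the area, and is easily proved''), so there is no proof to compare against; your argument is the standard one. Both routes you sketch are sound: the tail sets $W_n=Y\cap\bigcap_{k=1}^{n}\sigma^{-k}(X\setminus Y)$ are indeed clopen, decreasing, and have empty intersection once you know forward semiorbits are dense (your reduction via $K=\bigcap_{n\ge 0}\sigma^n(F)$ handles that correctly, since $\sigma(K)=K$ and $K\neq\emptyset$ by compactness), and the finite intersection property then yields $W_N=\emptyset$, i.e.\ $\lambda_Y\le N$ on $Y$. The alternative via the disjoint clopen decomposition $Y=\coprod_n U_n$ is equally valid and is in fact the partition the paper goes on to use immediately after the lemma.
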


Let $Y\subset X$ is a nonempty clopen subset. Let
$n(0)<n(1)<\cdots<n(l)$ be the distinct values of $\lambda(x)$ for
$x\in Y$. The Rokhlin tower based on a subset $Y\subset X$ with
$Y\neq \emptyset$ consist of the partition
$$Y=\coprod_{k=0}^{l}\{x\in Y: \lambda(x)=n(k)\}$$
of  $Y$ (the sets here are the base sets), and the corresponding
partition
$$X=\coprod_{k=0}^{l}\coprod_{j=0}^{n(k)-1}\sigma^j(\{x\in Y: \lambda(x)=n(k)\})$$
of $X$.

Actually, for our purposes it is more convenient to use the
partition

$$X=\coprod_{k=0}^{l}\coprod_{j=1}^{n(k)}\sigma^j(\{x\in Y: \lambda(x)=n(k)\}).$$

Note that
$$Y=\coprod_{k=0}^{l}\sigma^{n(k)}(\{x\in Y: \lambda(x)=n(k)\})$$

Since $Y$ is both closed and open, the sets $$Y_k=\{x\in Y:
\lambda(x)=n(k)\}$$ are all closed and open.

\begin{Prop} Let $X$ be a Cantor set, let $A$ be a unital simple
$C$*-algebra and $\alpha\in Aut(C(X,A))$. Suppose $C(X,A)$ is
$\alpha$-simple, let $Y\subset X$ be a nonempty clopen subset. Then
there exists a unique isomorphism
$$\gamma_Y: B_Y\rightarrow\bigoplus_{k=0}^{l}C(Y_k, M_{n(k)}(A))$$
such that if $f\in C(X,A),$ then
$$\gamma_Y(f)_k=\mbox{diag}(\alpha^{-1}(f)|_{Y_k},\alpha^{-2}(f)|_{Y_k},\cdots,\alpha^{-n(k)}(f)|_{Y_k})$$
and if $f\in C_0(X\backslash Y,A),$ then $$(\gamma_Y(uf))_k=s_k
\gamma_Y (f)_k$$ where $s_k\in M_{n(k)}\subset C(Y_k,M_{n(k)}(A))$
is defined by $$s_k=\left(\begin{matrix}0&0&0&\cdots&\cdots&0&0&1\\
1&0&0&\cdots&\cdots&0&0&0\\
0&1&0&\cdots&\cdots&0&0&0\\
\vdots&\vdots&\vdots&\ddots&&\vdots&\vdots&\vdots\\
\vdots&\vdots&\vdots&&\ddots&\vdots&\vdots&\vdots\\
0&0&0&\cdots&\cdots&0&0&0\\
0&0&0&\cdots&\cdots&0&1&0\\
 \end{matrix}\right)$$
\end{Prop}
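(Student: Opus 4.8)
The plan is to decompose $B_Y$ along the columns of the Rokhlin tower and identify each summand as a matrix algebra over $C(Y_k,A)$. Write $q_{k,j}=1_{\sigma^j(Y_k)}\otimes 1_A\in C(X,A)$ for $1\le j\le n(k)$; by the tower partition these are mutually orthogonal projections with $\sum_{k,j}q_{k,j}=1$. Setting $q_k=\sum_{j=1}^{n(k)}q_{k,j}$, I would first check that each $q_k$ is central in $B_Y$: it commutes with $C(X,A)$ trivially, and for $g\in C_0(X\backslash Y,A)$ one computes $q_k(ug)=u\,\alpha^{-1}(q_k)\,g=u\,q_k g=(ug)q_k$, the middle equality holding because $\alpha^{-1}(q_k)=1_{\bigcup_{j=0}^{n(k)-1}\sigma^j(Y_k)}$ and $q_k$ differ only on $Y_k\cup\sigma^{n(k)}(Y_k)\subset Y$, where $g$ vanishes. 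Hence $B_Y=\bigoplus_{k=0}^l q_kB_Yq_k$, and it suffices to identify each corner.

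Fix $k$. The partial isometries $w_j=u\,q_{k,j}\in B_Y$, defined for $1\le j\le n(k)-1$ (legitimate because $\sigma^j(Y_k)\cap Y=\emptyset$ in this range), satisfy $w_j^*w_j=q_{k,j}$ and $w_jw_j^*=\alpha(q_{k,j})=q_{k,j+1}$, so they form the subdiagonal of a full system of matrix units $e^{(k)}_{i,j}$ for the corner, obtained from ordered products and adjoints, with $\sum_j e^{(k)}_{j,j}=q_k$. Because no element of $B_Y$ can move level $n(k)$ back to level $1$ (that would require $u$ paired with a function supported in $Y$) and columns cannot be left, the diagonal corner is exactly $q_{k,j}B_Yq_{k,j}=q_{k,j}C(X,A)q_{k,j}\cong C(\sigma^j(Y_k),A)$; transporting all of these to $Y_k$ by the matrix units then gives $q_kB_Yq_k\cong C(Y_k,M_{n(k)}(A))$. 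Reading this isomorphism on the generators yields the stated formulas: for $f\in C(X,A)$ the diagonal entry at position $j$ is $f$ restricted to $\sigma^j(Y_k)$ and pulled back to $Y_k$, i.e.\ $\alpha^{-j}(f)|_{Y_k}$, while for $f\in C_0(X\backslash Y,A)$ the element $uf$ raises each level by one, producing $s_k\gamma_Y(f)_k$ (note the top term $\alpha^{-n(k)}(f)|_{Y_k}$ vanishes since $\sigma^{n(k)}(Y_k)\subset Y$, so the cyclic wrap in $s_k$ contributes nothing and $uf$ indeed lands in $B_Y$). Uniqueness is then automatic, since $C(X,A)$ and $uC_0(X\backslash Y,A)$ generate $B_Y$ and $\gamma_Y$ is prescribed on both.

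The step I expect to be the main obstacle is the bookkeeping of the fibrewise automorphisms $\beta_x$ when transporting the copies of $A$ up a column. The identification of the fiber of $A$ over $\sigma^j(y)$ with the fiber over $y\in Y_k$ is implemented by the cocycle $\beta_y^{-1}\beta_{\sigma(y)}^{-1}\cdots\beta_{\sigma^{j-1}(y)}^{-1}$ already built into $\alpha^{-j}$, and one must verify that conjugating a diagonal element by $w_j$ reproduces the adjacent diagonal entry with exactly this twist, so that the assignment $f\mapsto\mathrm{diag}(\alpha^{-1}(f)|_{Y_k},\dots,\alpha^{-n(k)}(f)|_{Y_k})$ is genuinely multiplicative and $*$-preserving and matches the shift $s_k$. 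When $A=\mathbb{C}$ the maps $\beta_x$ are trivial and the computation is the classical one of Putnam and Lin--Phillips; here the whole content is to confirm that the $\beta$-twisting encoded in $\alpha^{-j}$ is precisely the one forced by the matrix-unit relations.
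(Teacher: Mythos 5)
Your proposal is correct and is essentially the intended argument: the paper's own proof merely invokes Lemma 2.3 and asserts that $\gamma_Y$ "is easily verified" to be a bijective homomorphism, while you supply the actual verification via the central projections $q_k$, the partial isometries $uq_{k,j}$ as matrix units, and the observation that the wrap-around entry of $s_k$ vanishes because $\sigma^{n(k)}(Y_k)\subset Y$. Your closing remark about the $\beta$-twisting being exactly the cocycle packaged in $\alpha^{-j}$ is precisely the point the paper compresses into the sentence "since $\alpha(f)(x)=\beta_{\sigma^{-1}(x)}(f(\sigma^{-1}(x)))$, it is easy to see that $\gamma_Y$ is injective and surjective."
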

\begin{proof}By Lemma 2.3, there is a minimal homomorphism
$\sigma$ from $X$ to $X$ and there is a strongly continuous map from $X$
to Aut$(A)$, denote by $x$ to $\beta_x$, such that
$\alpha(f)(x)=\beta_{\sigma^{-1}(x)}(f(\sigma^{-1}(x)))$.

From above, we have $$Y=\coprod_{k=0}^{l}\sigma^{n(k)}(\{x\in Y:
\lambda(x)=n(k)\})=\coprod_{k=0}^{l}\sigma^{n(k)}(Y_k).$$

It is easy to verify that the $\gamma_Y$ is a homomorphism.

Since $\alpha(f)(x)=\beta_{\sigma^{-1}(x)}(f(\sigma^{-1}(x)))$, it
is easy to see that $\gamma_Y$ is injective and surjective.
\end{proof}

\section{Main result}

\begin{Lemma} Let $X$ be an infinite compact metric space, let $A$ be a
unital simple $C$*-algebra and $\alpha\in$Aut$(C(X,A)).$ If $C(X,A)$
is $\alpha$-simple, let $y\in X$, then $B_{\{y\}}$ is simple.
\end{Lemma}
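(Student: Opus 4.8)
The plan is to combine a conditional--expectation (topological freeness) argument with the dynamics of the underlying minimal homeomorphism. By Lemma 2.3 I would first fix a minimal homeomorphism $\sigma$ of $X$ and a strongly continuous map $\beta\colon X\to\mathrm{Aut}(A)$ with $\alpha(f)(x)=\beta_{\sigma^{-1}(x)}(f(\sigma^{-1}(x)))$. Since $X$ is infinite and $\sigma$ is minimal, $\sigma$ has no periodic points (a finite orbit would be a proper nonempty closed $\sigma$-invariant subset), so $\sigma$ is free. Writing $B=C(X,A)\rtimes_\alpha\mathbb{Z}$, I would use the canonical faithful conditional expectation $E\colon B\to C(X,A)$ that extracts the zeroth Fourier coefficient. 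Because $C(X,A)\subset B_{\{y\}}\subset B$ and $E$ only reads off the $u^0$--term, $E$ restricts to a faithful conditional expectation of $B_{\{y\}}$ onto $C(X,A)$.

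Let $J\neq 0$ be a closed two-sided ideal of $B_{\{y\}}$. The first and decisive step is to show $J\cap C(X,A)\neq 0$. Here I would exploit that every scalar function $h\in C(X)\,1_A$ is central in $C(X,A)$ and that $\alpha^{n}(h)$ is again a scalar function, supported (as a function on $X$) on $\sigma^{n}(\mathrm{supp}\,h)$. Given $0\neq b\in J$ with $b\ge 0$, set $a_0=E(b)\neq 0$ and approximate $b$ in norm by a Fej\'er (Ces\`aro) mean $b'=\sum_{|n|\le N}c_nu^n$ with $c_0=a_0$ (norm convergence comes from the dual $\mathbb{T}$--action). Choosing a small neighbourhood $V$ of a point where $\|a_0\|$ is nearly attained with $\sigma^n(V)\cap V=\emptyset$ for $1\le|n|\le N$, which is possible by freeness, and a contraction $h\in C(X)^+$ with $h(x_0)=1$ and $\mathrm{supp}\,h\subset V$, centrality forces all off-diagonal terms to vanish, so $hb'h=h a_0 h$. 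Hence $\|hbh-h a_0 h\|<\varepsilon$ with $hbh\in J$ and $\|h a_0 h\|\ge\|a_0\|-\varepsilon$. If $J\cap C(X,A)=0$, the quotient map is isometric on $C(X,A)$, and comparing norms in $B_{\{y\}}/J$ gives $\|a_0\|-\varepsilon\le\varepsilon$ for every $\varepsilon$, so $a_0=E(b)=0$; faithfulness of $E$ then forces $b=0$, a contradiction. Thus $J\cap C(X,A)\neq 0$.

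By the ideal/closed-set correspondence established in the first part of the proof of Lemma 2.3, $J\cap C(X,A)=\{f: f|_F=0\}$ for some closed $F\subsetneq X$. I would then conjugate elements $f\in J\cap C(X,A)$ by $ug$ and $(ug)^{*}$ for $g\in C_0(X\setminus\{y\})$, both of which lie in $B_{\{y\}}$; this produces $\alpha(|g|^2 f)$ and $|g|^2\alpha^{-1}(f)$ inside $J\cap C(X,A)$. Since each $\beta_x$ is an automorphism and $g$ may be chosen nonzero at any point of $X\setminus\{y\}$, one reads off the partial invariances $\sigma(F\setminus\{y\})\subseteq F$ and $\sigma^{-1}(F)\subseteq F\cup\{y\}$. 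A short orbit analysis finishes the argument: if $F\neq\emptyset$, following a point of $F$ forward keeps us in $F$ (via the first invariance) until we possibly reach $y$; if the forward orbit never reaches $y$ it is dense, so $F=X$; if it reaches $y$, the second invariance drives the entire, dense backward orbit of $y$ into $F$, again forcing $F=X$. As $F\subsetneq X$, this leaves $F=\emptyset$, i.e. $J\supseteq C(X,A)\ni 1$, whence $J=B_{\{y\}}$ and $B_{\{y\}}$ is simple.

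The hard part will be the detection lemma $J\cap C(X,A)\neq 0$: one must run the topological--freeness estimate inside the proper subalgebra $B_{\{y\}}$ rather than the full crossed product, checking carefully that the cutting-down functions $h$ and the elements $hbh$ stay in $B_{\{y\}}$ and that the faithful expectation behaves correctly under restriction. Once both partial invariances are available, the dynamical endgame is comparatively routine.
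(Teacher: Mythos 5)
Your argument is correct, and its overall architecture coincides with the paper's: (i) show a nonzero ideal $J$ of $B_{\{y\}}$ meets $C(X,A)$; (ii) identify $J\cap C(X,A)$ with the functions vanishing on a closed set and extract the two partial invariances by conjugating with $ug$ and $(ug)^*$ for $g\in C_0(X\setminus\{y\})$; (iii) finish with an orbit analysis using minimality and freeness. Steps (ii) and (iii) are essentially the printed proof, phrased there in terms of the open set $U=X\setminus F$. The genuine difference is in the detection step (i). The paper exploits the inductive limit structure $B_{\{y\}}=\lim B_{Y_m}$: by Lemma 3.5.10 of \cite{H.Lin2} some $B_{Y_m}\cap J$ is nonzero, and Proposition 2.6 guarantees that every element of $B_{Y_m}$ is an \emph{exact} finite sum $\sum_{n=-N}^{N}f_nu^n$ with $f_0\neq0$ for a nonzero positive element, after which the cut-down $ga^*ag=g^2f_0$ with a bump function over a set having disjoint $\sigma$-iterates lands in $J\cap C(X,A)$. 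You instead run the classical topological-freeness/intersection-property argument (Kishimoto--Archbold--Spielberg style) inside $B_{\{y\}}$ itself: faithful conditional expectation, Fej\'er means to reduce to an approximate finite sum, the same cut-down, and a quotient-norm comparison. Your route is more self-contained --- it needs neither Proposition 2.6 nor the inductive limit description of $B_{\{y\}}$ --- while the paper's route avoids the Fej\'er approximation and the isometry-of-the-quotient step by using the exact finite Fourier support available in $B_{Y_m}$. Two small points worth recording explicitly in your write-up: the Fej\'er means $b'$ need not lie in $B_{\{y\}}$ or in $J$, which is harmless since they serve only as norm approximants (whereas $hbh\in J$ because $h\in C(X)\cdot 1_A\subset B_{\{y\}}$); and the freeness of $\sigma$, which you correctly deduce from minimality on an infinite space, is what lets you choose $V$ with $\sigma^n(V)\cap V=\emptyset$ for $1\le|n|\le N$ and also rules out $\sigma^{-k}(y)=y$ in the final orbit analysis.
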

\begin{proof} Let $I\subset B_{\{y\}}$ be a nonzero ideal.  Since
$X$ is an infinite compact metric space, $A$ is a unital simple
$C$*-algebra and $C(X,A)$ is $\alpha$-simple, we have a minimal
homeomorphism  $\sigma$ of $X$ by Lemma 2.3. Then $I\cap C(X,A)$ is
an ideal in $C(X,A).$ So we can write $I\cap C(X,A)=C_0(U,A)$ for
some open set $U\subset X$ by the proof of Lemma 2.3, which is necessarily given by,
$$U=\{x\in X: \mbox{there is } f\in I\cap C(X,A) \mbox{ such that } f(x)\neq 0\}.$$

We first claim that $U\neq \emptyset,$

Write
$$B_{\{y\}}=\lim_{\rightarrow}B_{Y_m}$$ for some decreasing sequence $Y_{1}\supset
Y_2\supset\cdots$ of closed subsets of $X$ with
$\cap_{n=1}^{\infty}Y_n=\{y\},$ and $\mbox{int}(Y_m)\neq \emptyset,$

Then there exists $m$ such that $B_{Y_m}\cap I\neq \{0\}$ by Lemma 3.5.10 of \cite{H.Lin2}. Let $a$
be a nonzero element of this intersection. Using Proposition 2.6,
one can fairly easily prove that there is $N$ such that every
element of  $B_{Y_m}$ can be written in the form $\sum_{n=-N}^{N}
f_n u^n,$ with $f_n\in C(X,A)$ for $-N\leq n\leq N.$ Moreover, if
$a\neq 0,$ and one writes $a^*a=\sum_{n=-N}^{N} f_n u^n,$ then
$f_0\neq 0.$ Choose $x\in X$ such that $f_0(x)\neq 0,$  choose a
neighborhood $V$ of $x$ such that the sets $h^n(V)$, for $-N\leq n
\leq N$, are disjoint, and choose $g\in C(X)$ such that supp$(g)\in
V$ and $g(x)\neq 0.$ Then $g\in B_{\{y\}},$ and one checks that

$$ga^*ag=\sum_{n=-N}^N gf_n u^n g=\sum_{n=-N}^N g(g\circ \sigma^n)f_n u^n=g^2f_0.$$
So $g^2f_0$ is a nonzero element of $I\cap C(X,A)$, proving the
claim.

We next claim that $\sigma^{-1}(U\backslash\{\sigma(y)\})\subset U.$
So let $x\in U\backslash\{\sigma(y)\}.$ Choose $f\in I\cap C(X,A)$
such that $f(x)\neq 0,$ and choose $g\in C_0(X\backslash \{y\})$
such that $g(\sigma^{-1}(x))\neq 0.$ Then $ug\in B_{\{y\}},$ and
$$(ug)^*f(ug)=\overline{g}u^*fug=|g|^2(u^*fu).$$
Thus $|g|^2(u^*fu)\in I\cap C(X,A)$ and is
nonzero at $\sigma^{-1}(x)$, This proves the claim.

We further claim that $\sigma(U\backslash\{y\})\subset U.$ The proof
is similar: let $x\in U\backslash \{y\},$ let $f\in I\cap C(X,A)$
and $g\in C_0 (X\backslash \{y\})$ be nonzero at $x,$ and consider
$ug\in B_{\{y\}},$
$$(ug)f(ug)^*=ugf\overline{g}u^*=|g\circ\sigma^{-1}|^2 (ufu^*).$$
Thus $|g\circ\sigma^{-1}|^2 (ufu^*)\in I\cap
C(X,A)$ and is nonzero at $\sigma(x)$, So
$\sigma(U\backslash\{y\})\subset U.$

Now set $Z=X\backslash U.$ The last two claims above imply that if
$x\in X$ and $x$ is not in the orbit of $y$, Then $\sigma^k(x)\in Z$
for all $k\in \mathbb{Z}.$ Since $\sigma$ is minimal, $Z$ is closed,
and $Z\neq X$, this is impossible. If $\sigma^n(y)\in Z$ for some
$n>0,$ then $\sigma^{-1}(U\backslash\{\sigma(y)\})\subset
U\backslash \{\sigma(y)\}$ implies $h^{k}(y)\in Z$ for all $k\geq
n.$ Since $\sigma$ is dense by minimality, this is also a
contradiction. Similarly, if $\sigma^n(y)\in Z$ for some $n\leq 0,$
then $Z$ would contain the dense set $\{\sigma^{k}(y):k\leq n\},$
again a contradiction.

So $U=X$ and $1\in I.$ Then $B_{\{y\}}$ is simple.

\end{proof}

\begin{Lemma}Let $X$ be a Cantor set, let $A$ be a
unital separable simple amenable $C$*-algebra with tracial rank zero which
satisfies the UCT (Universal Coefficient Theorem), and let $\alpha\in
Aut(C(X,A))$. Suppose $C(X,A)$ is $\alpha$-simple.  It follows that
for any $y\in X,$ the $C$*-algebra $B_{\{y\}}$ has tracial rank
zero.
\end{Lemma}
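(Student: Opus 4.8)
The plan is to read off the internal structure of $B_{\{y\}}$ from Proposition 2.6, combine it with the simplicity proved in Lemma 3.1, and then appeal to the stability of tracial rank zero under simple inductive limits. First I would fix a decreasing sequence of nonempty clopen sets $Y_1\supset Y_2\supset\cdots$ with $\bigcap_m Y_m=\{y\}$ and write $B_{\{y\}}=\lim_{\rightarrow}B_{Y_m}$, as in the paragraph preceding Proposition 2.6; the connecting maps $B_{Y_m}\hookrightarrow B_{Y_{m+1}}$ are unital and injective since all these algebras contain the common unit of $C(X,A)$. By Proposition 2.6 each $B_{Y_m}\cong\bigoplus_{k=0}^{l_m}C(Y_{m,k},M_{n_m(k)}(A))$, so $B_{\{y\}}$ is a unital inductive limit whose building blocks are finite direct sums of algebras $C(Z,M_n(A))$ with $Z$ a clopen subset of the Cantor set $X$, and the limit is simple by Lemma 3.1.

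Next I would show that each building block has tracial rank zero in the general (not necessarily simple) sense. Since $X$ is a Cantor set, every clopen $Z\subset X$ is totally disconnected, so $C(Z)$ is an AF algebra and $C(Z,M_n(A))=C(Z)\otimes M_n(A)$ is the inductive limit, over a refining sequence of finite clopen partitions of $Z$, of the finite direct sums $\bigoplus_j M_n(A)$ with unital injective diagonal connecting maps. Each summand $M_n(A)$ is unital simple with $\mathrm{TR}(M_n(A))=0$, because $A$ has tracial rank zero and matrix amplification preserves this. Using that tracial rank zero is preserved under finite direct sums (so that $\mathrm{TR}(\bigoplus_k M_{n_k}(A))=\max_k\mathrm{TR}(M_{n_k}(A))=0$) and under these inductive limits, I would conclude $\mathrm{TR}(C(Z,M_n(A)))=0$, and hence $\mathrm{TR}(B_{Y_m})=0$ for every $m$.

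Finally, $B_{\{y\}}$ is a unital separable simple inductive limit of $C^*$-algebras of tracial rank zero with injective connecting maps, so Lin's theorem that tracial rank zero passes to simple inductive limits yields $\mathrm{TR}(B_{\{y\}})=0$. The amenability and UCT hypotheses on $A$ are inherited by $B_{\{y\}}$ through the limit and place it in the classifiable class used later in the paper, but the tracial-rank conclusion here rests only on the simplicity from Lemma 3.1 together with the building-block estimate of the previous paragraph.

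The step I expect to be the main obstacle is exactly the passage to tracial rank zero for the non-simple building blocks $B_{Y_m}$ and then to the simple limit, since the approximants $\bigoplus_j M_n(A)$ are not themselves simple. This forces me to work with the general definition of tracial topological rank and to verify its approximation estimates honestly: given $\varepsilon>0$, a finite set $\mathcal{F}$, and a nonzero positive element $a$, one must approximately contain $\mathcal{F}$ in the image of some $B_{Y_m}$, replace $\mathcal{F}$ by a nearby finite set sitting inside a single $\bigoplus_j M_n(A)$, and then cut down by the unit $p$ of a suitable finite-dimensional-over-$A$ subalgebra while controlling $[1-p]\le[a]$ by means of simplicity. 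Checking these estimates, rather than the algebraic reduction, is where the real content lies.
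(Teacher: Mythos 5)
Your reduction to the inductive limit $B_{\{y\}}=\lim B_{Y_m}$ with $B_{Y_m}\cong\bigoplus_k C(Y_{m,k},M_{n_m(k)}(A))$ matches the paper, but the key step of your argument --- that the non-simple building blocks $\bigoplus_j M_n(A)$, and hence $C(Z,M_n(A))$ and $B_{Y_m}$, have tracial rank zero in the general sense --- is false in the generality required, and this is exactly the obstacle the paper's proof is designed to circumvent. The problem is condition (3) of the definition, $[1-p]\leq[a]$, quantified over \emph{all} nonzero positive $a$. Take $a=(b,0)\in A\oplus A$ with $b\neq 0$: since Murray--von Neumann equivalence in $A\oplus A$ is componentwise and $\overline{a(A\oplus A)a}=\overline{bAb}\oplus\{0\}$, the requirement $[1-p]\leq[a]$ forces the second component of $p$ to be $1_A$, and then condition (2) forces the second components of the finite set $\mathcal{F}$ to lie within $\varepsilon$ of a unital finite-dimensional subalgebra of $A$. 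If $\mathcal{F}$ contains $(0,u)$ for a unitary $u$ with $[u]\neq 0$ in $K_1(A)$, this is impossible, since a unitary close to a finite-dimensional unital subalgebra has trivial $K_1$-class. So $\mathrm{TR}(A\oplus A)=0$ fails whenever $K_1(A)\neq 0$ (more generally whenever $A$ is not AF), and the hypotheses of the lemma allow such $A$. The same obstruction applies to $C(Z,M_n(A))$ with $a$ supported on a proper clopen subset of $Z$. Your ``finite direct sums preserve tracial rank zero'' step is therefore the gap, and it cannot be repaired by restricting to full positive elements, because after pushing a positive element of the simple limit down into $B_{Y_m}$ it need not remain full there.

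The paper's actual route uses properties that, unlike tracial rank, genuinely do pass to direct sums, tensor products with $C(X)$, and inductive limits: real rank zero and locally finite decomposition rank for $B_{\{y\}}$ are obtained from the building blocks, simplicity comes from Lemma 3.1, and $\mathcal{Z}$-stability of $B_{\{y\}}$ is deduced from $\mathcal{Z}$-stability of $A$ --- which in turn requires the classification-theoretic input that $A$ is an AH algebra of bounded dimension growth; this is precisely where the amenability and UCT hypotheses enter, contrary to your closing remark that they are not needed for the tracial-rank conclusion. The conclusion $\mathrm{TR}(B_{\{y\}})=0$ then follows from Theorem 2.1 of Winter's paper on simple $C^*$-algebras with locally finite decomposition rank. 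You would need to either follow that route or supply a genuinely new argument for the comparison condition (3) in the simple limit; the elementary inductive-limit argument you propose does not close.
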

\begin{proof} Let $Y\subset X$ be a nonempty clopen subset,
applying Proposition 2.6, we have $$B_Y\cong\bigoplus_{k=0}^{l}C(Y_k,
M_{n(k)}(A))=\bigoplus_{k=0}^{l}C(Y_k)\otimes M_{n(k)}(A).$$

Let $y\in X,$ then there exists
  a decreasing sequence $Z_{1}\supset Z_2\supset\cdots$ of clopen
subsets of $X$ with $\cap_{n=1}^{\infty}Z_n=\{y\}.$ Since the
property of having locally finite decomposition rank passes to
tensor products, inductive limits by Proposition 1.3 of
\cite{w.winter2}, then $B_{\{y\}}=\lim B_{Z_{n}}$ has locally finite
decomposition rank. Clearly, for any $n\in \mathbb{N}$, $B_{Z_{n}}$
has real rank zero, so $B_{\{y\}}$ has also real rank zero. Since
$A$ is a unital separable simple amenable $C$*-algebra with tracial rank zero
which satisfies the Universal Coefficient Theorem, $A$ is an
AH-algebra of slow dimension growth with real rank zero by
\cite{H.Lin10}, then $A$ is an AH-algebra of bounded dimension growth
by \cite{Dadarlat}. By Corollary 3.1 of \cite{Toms1}, $A$ absorbs
the Jiang-Su algebra $\mathcal{Z}$ tensorially. By Corollary 3.4 of
\cite{Toms2},  $B_{\{y\}}$ absorbs the Jiang-Su algebra
$\mathcal{Z}$ tensorially, so the $C$*-algebra $B_{\{y\}}$ has
tracial rank zero by Theorem 2.1 of \cite{w.winter2}.

\end{proof}

\begin{Lemma} Any trace on $B_{\{y\}}$ is restricted to $C(X)$ is a
$\sigma$-invariant measure on $X.$
\end{Lemma}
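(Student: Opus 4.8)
The plan is to show that the positive tracial functional $\mu$ on $C(X)$ obtained by restricting a trace $\tau$ on $B_{\{y\}}$ to the copy of $C(X)$ given by $C(X)\cdot 1_A\subset C(X,A)\subset B_{\{y\}}$, viewed as a finite Radon measure on $X$ via the Riesz representation theorem, satisfies $\int_X g\,d\mu=\int_X g\circ\sigma^{-1}\,d\mu$ for every $g\in C(X)$; this is exactly $\sigma$-invariance. Here $\sigma$ is the minimal homeomorphism supplied by Lemma 2.3, and since $\alpha(g\cdot 1_A)(x)=\beta_{\sigma^{-1}(x)}(g(\sigma^{-1}(x))1_A)=g(\sigma^{-1}(x))1_A$, the automorphism $\alpha$ restricts on $C(X)\cdot 1_A$ to the map $g\mapsto g\circ\sigma^{-1}$.

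First I would exploit the trace identity on elements of the form $v=uh$, where $h\in C_0(X\setminus\{y\})$ is scalar-valued; by definition $v\in uC_0(X\setminus\{y\},A)\subset B_{\{y\}}$, and hence $v^*=\bar h\,u^*\in B_{\{y\}}$ as well. A direct computation gives $v^*v=|h|^2$ and $vv^*=u|h|^2u^*=\alpha(|h|^2)=|h|^2\circ\sigma^{-1}$, both lying in $C(X)\cdot 1_A$. Applying $\tau(v^*v)=\tau(vv^*)$ therefore yields $\int_X|h|^2\,d\mu=\int_X |h|^2\circ\sigma^{-1}\,d\mu$ for every $h\in C_0(X\setminus\{y\})$. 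Since every $\phi\in C_0(X\setminus\{y\})$ is a linear combination of functions of the form $|h|^2$ (a nonnegative $\phi$ is $|\sqrt{\phi}|^2$), I obtain $\int_X\phi\,d\mu=\int_X\phi\circ\sigma^{-1}\,d\mu$ for all $\phi\in C_0(X\setminus\{y\})$. By regularity of $\mu$ this means $\mu(E)=\mu(\sigma(E))$ for every Borel $E\subseteq X\setminus\{y\}$, equivalently $\mu(\sigma^{-1}(F))=\mu(F)$ for every Borel $F\subseteq X\setminus\{\sigma(y)\}$.

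The main obstacle, and the only genuinely delicate point, is that $u$ itself does \emph{not} belong to $B_{\{y\}}$; only the products $uh$ with $h$ vanishing at $y$ do, which is precisely why the test functions above are forced to vanish at $y$, leaving the invariance relation a priori unverified at the single orbit point $\sigma(y)$. To close this gap I would pass to complements: for an arbitrary Borel set $F$ with $\sigma(y)\in F$, apply the established relation to $X\setminus F\subseteq X\setminus\{\sigma(y)\}$ and use that $\sigma$ is a bijection, so $\sigma^{-1}(X\setminus F)=X\setminus\sigma^{-1}(F)$ and $\mu(X\setminus\sigma^{-1}(F))=\mu(X\setminus F)$; subtracting from the total mass $\mu(X)$ gives $\mu(\sigma^{-1}(F))=\mu(F)$ for this $F$ too. (Alternatively, testing the limiting identity against the constant function $g\equiv 1$ forces $\mu(\{y\})=\mu(\{\sigma(y)\})$, which removes the stray boundary term directly.) Hence $\mu(\sigma^{-1}(F))=\mu(F)$ for all Borel $F\subseteq X$, so $\mu$ is $\sigma$-invariant, completing the proof.
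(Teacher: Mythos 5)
Your proof is correct, and it runs on the same engine as the paper's: the only elements of $B_{\{y\}}$ usable for a tracial argument are products of $u$ with functions vanishing at $y$, and one feeds such products into the trace identity. Where you diverge is in how the exceptional point is handled. The paper treats an arbitrary (real) $f\in C(X)$ in one stroke by subtracting the constant $f(y)$ and factoring, setting $a=u|f-f(y)|^{1/2}$ and $b=u(f-f(y))|f-f(y)|^{-1/2}$ (the second factor extended by continuity across the zero set of $f-f(y)$), so that $\tau(ab^*)=\tau(b^*a)$ gives $\tau(f\circ\sigma^{-1}-f(y))=\tau(f-f(y))$ directly and the constant cancels; no measure theory beyond Riesz is needed and every $f$ is handled at once. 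You instead use only the symmetric identity $\tau(v^*v)=\tau(vv^*)$ for $v=uh$, which confines the invariance to test functions in $C_0(X\setminus\{y\})$, and then you repair the single missing orbit point $\sigma(y)$ at the level of measures by passing to complements and using finiteness of the total mass; that patch is airtight. Both arguments are complete; the paper's factorization is slicker and purely algebraic, while yours uses a more elementary trace identity at the cost of an extra measure-theoretic step. Two minor remarks: the decomposition of a general $\phi\in C_0(X\setminus\{y\})$ into four nonnegative pieces $(\mathrm{Re}\,\phi)_{\pm}$, $(\mathrm{Im}\,\phi)_{\pm}$ (each of which still vanishes at $y$) deserves a word, and your parenthetical alternative of ``testing against $g\equiv 1$'' needs an approximation $\phi_n\uparrow 1_{X\setminus\{y\}}$ with $\phi_n\in C_0(X\setminus\{y\})$ spelled out, since $1\notin C_0(X\setminus\{y\})$ --- but the complement argument you give as the main route already suffices, so neither point affects correctness.
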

\begin{proof}Let $\tau$ be a normalized trace on $B_{\{y\}}$, and let
$f\in C(X).$ Set
$$a=u|f-f(y)|^{1/2}\quad\mbox{and}\quad b=u(f-f(y)\cdot 1)|f-f(y)|^{-1/2}$$
Then $a$ and $b$ are both in $B_{\{y\}}$. Moreover,
$$\tau(f\circ\sigma^{-1}-f(y)\cdot 1)=\tau(u(f-f(y)\cdot 1)u^*)=\tau(ab^*)=\tau(b^*a)=\tau(f-f(y)\cdot 1).$$
Cancelling $\tau(f(y)\cdot 1),$ we get
$\tau(f\circ\sigma^{-1})=\tau(f).$
\end{proof}

\begin{Lemma} Let $X$ be a Cantor set, let $A$ be a
unital separable simple amenable $C$*-algebra with tracial rank zero which
satisfies the UCT, and let $\alpha\in Aut(C(X,A))$. Suppose $C(X,A)$
is $\alpha$-simple and $[\alpha]=[\mbox{id}_{1\otimes A}]$ in $KL(1\otimes A,1\otimes A)$. Let
$B=C(X,A)\rtimes_{\alpha}\mathbb{Z}$, and let $y\in X.$ Then for any
$\varepsilon>0$ and finite subset $\mathcal{F}\subset B,$ there is a
projection $p\in B_{\{y\}}$ such that:

(1) $\|pa-ap\|<\varepsilon$ for all $a\in \mathcal{F}.$

(2) $pap\in pB_{\{y\}}p$ for all $a\in \mathcal{F}.$

(3) $\tau(1-p)<\varepsilon$ for all $\tau\in T(B_{\{y\}}).$
\end{Lemma}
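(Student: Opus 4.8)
The plan is to treat $B_{\{y\}}$ as a large subalgebra of $B$ in the sense of Putnam and Lin--Phillips, and to produce $p$ from a Rokhlin tower over a small clopen neighborhood of $y$. First I would reduce the finite set: since every element of $B$ is a norm limit of finite sums $\sum_{|n|\le N} f_n u^n$ with $f_n\in C(X,A)$, and conditions (1)--(2) are stable under such approximations (up to enlarging $\varepsilon$ and fixing a bound $N$ on the powers of $u$ that occur), it suffices to arrange (1)--(3) for a finite set of the form $\{u\}\cup\mathcal{G}$ with $\mathcal{G}\subset C(X,A)$. By Lemma 2.3 I fix the minimal homeomorphism $\sigma$ and the strongly continuous field $\beta$, and I choose a decreasing clopen base $Y_1\supset Y_2\supset\cdots$ with $\bigcap_m Y_m=\{y\}$. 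For a suitable $Y=Y_m$, Lemma 2.5 bounds the first return times and Proposition 2.6 gives $B_Y\cong\bigoplus_{k=0}^{l} C(Y_k,M_{n(k)}(A))$, so that $B_Y$ comes equipped with tower matrix units and the top levels $\sigma^{n(k)}(Y_k)$ partition $Y$.

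Next I would isolate the obstruction. Writing $u=S+u\chi_Y$ with $S=\sum_k\sum_{j=1}^{n(k)-1}u\chi_{\sigma^j(Y_k)}\in B_Y\subset B_{\{y\}}$ the truncated up-shift of the tower, the only summand of $u$ failing to lie in $B_{\{y\}}$ is the single return block $u\chi_C$, where $C=\sigma^{n(k_1)}(Y_{k_1})$ is the (clopen) top level containing $y$. I would then build $p\in B_{\{y\}}$ as a perturbation of the central projection $\chi_{X\setminus C}$: using the tower matrix units, the unavoidable discontinuity of $[\chi_{X\setminus C},u]$ is spread over the column by a telescoping (Berg-type) modification so that $\|[p,u]\|<\varepsilon$, while $p$ is kept below $1-\chi_C$ and with $\tau(1-p)$ comparable to $\mu_\tau(C)$. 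Since $p\le 1-\chi_C$, one has $p(u\chi_C)p=0$ and hence $pup\in pB_{\{y\}}p$. Shrinking $Y$ so that each $f\in\mathcal{G}$ is nearly constant on the finitely many clopen pieces that $p$ actually mixes is what forces $\|[p,f]\|<\varepsilon$ and $pfp\in pB_{\{y\}}p$. This would give (1) and (2).

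For (3), the construction keeps $1-p$ supported near the return levels through the orbit of $y$, so $\tau(1-p)$ is bounded by a fixed multiple of $\mu_\tau(Y)$. By Lemma 3.3 each $\tau\in T(B_{\{y\}})$ restricts to a $\sigma$-invariant Borel probability measure $\mu_\tau$, and since the minimal system is infinite these measures have no atoms; a weak${}^{*}$-compactness (Dini-type) argument then shows $\sup_{\tau}\mu_\tau(Y_m)\to 0$ as $m\to\infty$, so choosing $m$ large enough yields $\tau(1-p)<\varepsilon$ for all $\tau$.

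I expect the main obstacle to be the simultaneous commutator estimate in the second step. A genuine projection that almost commutes with $u$ cannot lie in $C(X)$, which is exactly the relative commutant of $C(X,A)$ in $B$, so $p$ must mix fibers along the orbit; but mixing $\sigma$-adjacent fibers at full strength produces order-one commutators with a generic $f\in C(X,A)$. Reconciling the two requirements --- a tall tower to spread the $u$-commutator thin, and spatially small base pieces to make the finitely many functions in $\mathcal{G}$ essentially constant where the mixing occurs, together with the hypothesis $[\alpha]=[\mathrm{id}_{1\otimes A}]$ in $KL(1\otimes A,1\otimes A)$ used to match the local fiber projections up to approximate unitary equivalence --- is the delicate heart of the argument.
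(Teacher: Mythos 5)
Your overall strategy coincides with the paper's: reduce to $\mathcal{F}=\mathcal{G}\cup\{u\}$ with $\mathcal{G}\subset C(X,A)$, build a Rokhlin tower over a small clopen neighborhood $Y$ of $y$ via Lemmas 2.3 and 2.5, repair the commutator with $u$ by Berg's technique spread over a tall column, keep $p$ below $1-\chi_Y$ so that $pup\in pB_{\{y\}}p$, and control $\tau(1-p)$ by the $\sigma$-invariant measure of the tower (Lemma 3.3). The bookkeeping differs slightly --- the paper takes $1-p$ to be the spliced loop projection $e=\sum_{n=1}^{N}e_n$ supported on the whole tower $\bigcup_{n=-N_0}^{N}\sigma^n(Y)$, rather than a perturbation of $\chi_{X\setminus C}$ for a single top level $C$ --- but that is immaterial, and your Dini-type uniformity for $\sup_\tau\mu_\tau(Y_m)\to 0$ is fine (indeed disjointness of $N+N_0+1$ translates already forces $\mu(Y)\le (N+N_0+1)^{-1}$).

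The genuine gap is precisely the step you flag as ``the delicate heart'' and then leave unexecuted: the construction of the splicing partial isometry from $q_0$ to $q_N$. The natural candidate $w'=\sum_{k}\chi_{Y(k,N)}u^{N-n(k)}$ has the right support projections, but conjugation by $u^{N-n(k)}$ acts on the fiber copy of $A$ by the composed automorphism $\beta_{\sigma^{N-n(k)-1}(\cdot)}\circ\cdots\circ\beta_{(\cdot)}$, which a priori moves the elements $f(y)$, $f\in\bigcup_{i=0}^{N_0}\alpha^{-i}(\mathcal{G})$, by a norm-one amount; no choice of tower height or base diameter can repair this, so the estimate $\|[p,f]\|<\varepsilon$ for $f\in\mathcal{G}$ does not follow from the geometric choices alone. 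The paper resolves it by inserting correcting unitaries, $w=(\sum_k u_{N-n(k)}\chi_{Y(k,N)})w'$: since $\mathrm{TR}(A)=0$, $A$ satisfies the UCT, $[\alpha]=[\mathrm{id}_{1\otimes A}]$ in $KL(1\otimes A,1\otimes A)$, and the fiber automorphisms preserve traces (Lemma 4.1 of \cite{H.Lin3}), the uniqueness theorem 3.6 of \cite{H.Lin12} makes the composed return automorphisms approximately inner on the relevant finite subset of $A$, locally uniformly in the base point, and compactness of $Y$ glues the local unitaries into $u_j\in C(X,A)$. Your phrase about matching ``local fiber projections'' up to approximate unitary equivalence points at the wrong target: what must be matched is the action of the return automorphisms on a finite set of fiber elements, and supplying that approximate innerness statement is the actual mathematical content that the $KL$ hypothesis buys. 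Without it the proposal does not close.
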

\begin{proof} We may assume that $\mathcal{F}=\mathcal{G}\cup \{u\}$
for some finite subset $\mathcal{G}\subset C(X,A).$

By Lemma 2.3, there is a minimal homomorphism $\sigma$ from $X$ to $X$
and there is a strongly continuous map from $X$ to Aut$(A)$, denote by
$x$ to $\beta_x$, such that
$\alpha(f)(x)=\beta_{\sigma^{-1}(x)}(f(\sigma^{-1}(x)))$.

Choose $N_0\in \mathbb{N}$ so large that $4\pi /N_0<\varepsilon.$
Choose $\delta_0>0$ with $\delta_0<\frac{1}{2}\varepsilon$ and so
small that $d(x_1,x_2)<4\delta_0$ implies
$\|f(x_1)-f(x_2)\|<\frac{1}{4}\varepsilon$ for all $f\in
\bigcup_{i=0}^{N_0}\alpha^{-i}(\mathcal{G}).$ Choose $\delta>0$ with
$\delta\leq\delta_0$ and such that whenever $d(x_1,x_2)<\delta$ and
$0\leq n\leq N_0,$ then
$d(\sigma^{-n}(x_1),\sigma^{-n}(x_2))<\delta_0.$

Since $\sigma$ is minimal, there is $N>N_0+1$ such that
$d(\sigma^N(y),y)<\delta.$ Since $\sigma$ is free, there is a clopen
neighborhood $Y$ of $y$ in $X$ such that
$$\sigma^{-N_0}(Y),\sigma^{-N_0+1}(Y),\dots, Y, \sigma(Y), \dots,
\sigma^N(Y)$$ are disjoint  and  all have diameter less than
$\delta$, and furthermore $\mu(Y)<\varepsilon/(N+N_0+1)$ for every
$\sigma$-invariant Borel probability measure $\mu$.

Define continuous functions projection
$q_0(x)=\left\{\begin{matrix}1,& x\in Y\\
0,&\quad x\in X\backslash Y
\end{matrix}\right.$

For $-N_0\leq n\leq N$ set, $q_n=\alpha^n(q_0)=\left\{\begin{matrix}1,& x\in \sigma^n(Y)\\
0,&\quad x\in X\backslash \sigma^n(Y)
\end{matrix}\right.$, so the $q_n$ are
mutually orthogonal projections in $B_{\{y\}}.$

We now have a sequence of projections:
$$q_{-N_0},\dots,q_{-1}, q_{0}, \dots, q_{N-N_0}, \dots, q_{N-1},
q_{N}.$$

The projections $q_0$ and $q_N$ live over clopen sets which are
disjoint but close to each other, and similarly for the pairs
$q_{-1}$ and $q_{N-1}$ down to $q_{-N_0}$ and  $q_{N-N_0}.$ We are
now going to use Berg's technique \cite{Berg} to splice this
sequence along the pairs of indices $(-N_0, N-N_0)$ through $(0,N),$
obtaining a loop of length $N$ on which conjugation by $u$ is
approximately the cyclic shift.

We claim that there is a partial isometry $w\in B_{\{y\}}$ such that
$w^*w=q_0$, $ww^*=q_N$ and
$\|wf|_Y-f|_{\sigma^N(Y)}w\|<\frac{\varepsilon}{4}$ for all $f\in
\bigcup_{i=0}^{N_0}\alpha^{-i}(\mathcal{G}).$

Let $x\in Y.$ The first return time $\lambda_Y(x)$ (or $\lambda(x)$
if $Y$ is understood) of $x$ to $Y$ is the smallest integer $n\geq
1$ such that $\sigma^n(x)\in Y$. By Lemma 2.5, we let
$n(0)<n(1)<\cdots<n(l)$ be the distinct values of $\lambda(x)$ for
$x\in Y$.

 We denote
$Y(k,j)=\sigma^j(\lambda^{-1}(n(k)))$,
 So
$$X=\coprod_{k=0}^{l}\coprod_{j=1}^{n(k)}\sigma^j(\{x\in Y: \lambda(x)=n(k)\})=
\coprod_{k=0}^{l}\coprod_{j=1}^{n(k)}Y(k,j).$$

Define continuous functions projection
$\chi_{Y(k,j)}=\left\{\begin{matrix}1,& x\in Y(k,j)\\
0,&\quad x\in X\backslash Y(k,j)
\end{matrix}\right.$

Define $w'=\sum_{k=0}^{l}\chi_{Y(k,N)}u^{N-n(k)}$, then $w'\in
B_{\{y\}}.$

$$w'^*w'=(\sum_{k=0}^{l}u^{-N+n(k)}\chi_{Y(k,N)})(\sum_{k=0}^{l}\chi_{Y(k,N)}u^{N-n(k)})=\sum_{k=0}^{l}\chi_{Y(k,n(k))}=q_0.$$

$$w'w'^*=(\sum_{k=0}^{l}\chi_{Y(k,N)}u^{N-n(k)})(\sum_{k=0}^{l}u^{-N+n(k)}\chi_{Y(k,N)})=\sum_{k=0}^{l}\chi_{Y(k,N)}=q_N.$$

Let $a|_Y=\left\{\begin{matrix}a, &\quad x\in Y\\
0, &\quad x\in X\backslash Y\end{matrix}\right.$ for all $a\in A.$

Since $A$ is a unital separable simple amenable $C$*-algebra with tracial
rank zero which satisfies the UCT, by the classification theorem of \cite{H.Lin7}, \cite{H.Lin10} and \cite{Elliott2}, $A$ is a unital separable simple AH-algebra.
By the assumption  $[\alpha]=[\mbox{id}_{1\otimes A}]$ in $KL(1\otimes A,1\otimes A)$ and by applying Lemma 4.1 of \cite{H.Lin3}, we have  $\tau(\beta_x(a))=\tau(a)$ for all $a\in A$ and for all $\tau\in A.$ Since TR$(A)=0$ and $[\alpha]=[\mbox{id}_{1\otimes A}]$ in $KL(1\otimes A,1\otimes A)$, $\tau(\beta_x(a))=\tau(a)$ for all $\tau\in A$ and for all $\tau\in A,$ by applying 3.6 of \cite{H.Lin12}, there is a clopen
neighborhood $Y_x$ of $x$, there exists an unitary $u_j'\in A$ such that
$u_{j}'|_{\sigma^j(Y_x)}u^ja|_{Y_x}u^{*j}u'^*_{j}|_{\sigma^j(Y_x)}\approx_{\frac{\varepsilon}{8}}a|_{\sigma^j(Y_x)}$
for all $a\in \{f(y)|f\in \cup_{i=0}^{N_0}\alpha^{-i}(\mathcal{G})\}.$ Because $Y$ is compact subset, we can get
$u_j\in C(X,A)$ such that
$u_{j}|_{\sigma^j(Y)}u^ja|_{Y}u^{*j}u^*_{j}|_{\sigma^j(Y)}\approx_{\frac{\varepsilon}{8}}a|_{\sigma^j(Y)}$
for all $a\in \{f(y)|f\in \cup_{i=0}^{N_0}\alpha^{-i}(\mathcal{G})\}.$

Define $w=(\sum_{k=0}^{l}u_{N-n(k)}\chi_{Y(k,N)})w',$ then $w\in
B_{\{y\}},w^*w=q_0, ww^*=q_N$ and
$$wa|_Yw^*=\sum_{k={0}}^lu_{N-n(k)} \chi_{Y(k,N)} u^{N-n(k)}a|_Yu^{-N+n(k)}\chi_{Y(k,N)} u^{*}_{N-n(k)}\approx_{\frac{\varepsilon}{8}}a|_{\sigma^N(Y)},$$
$wa|_Y\approx_{\frac{\varepsilon}{8}}a|_{\sigma^N(Y)}w,$ so
$\|wf|_Y-f|_{\sigma^N(Y)}w\|<\frac{\varepsilon}{4}$ for all $f\in
\bigcup_{i=0}^{N_0}\alpha^{-i}(\mathcal{G}).$ The claim follows.

For $t\in \mathbb{R}$ define  $v(t)=\cos(\pi t/2)(q_0+q_N)+\sin(\pi
t/2)(w-w^*).$ Then $v(t)$ is a unitary in the corner
$(q_0+q_N)B_{\{y\}}(q_0+q_N)$ whose matrix with respect to the
obvious block decomposition is

\begin{center}$v(t)=\bigg(\begin{matrix}\cos(\pi t/2)&-\sin(\pi t/2)\\
\sin(\pi t/2)&\cos(\pi t/2)
\end{matrix}\bigg)$
\end{center}
So
$\|v(t)(f|_Y+f|_{\sigma^N(Y)})-(f|_Y+f|_{\sigma^N(Y)})v(t)\|=\|((\cos(\pi
t/2)(q_0+q_N)+\sin(\pi
t/2)(w-w^*))(f|_Y+f|_{\sigma^N(Y)})-(f|_Y+f|_{\sigma^N(Y)})(\cos(\pi
t/2)(q_0+q_N)+\sin(\pi t/2)(w-w^*))\|=\|\sin(\pi
t/2)(wf|_Y-w^*f|_{\sigma^N(Y)})-\sin(\pi
t/2)(f|_{\sigma^N(Y)}w-f|_Yw^*)\|<\frac{\varepsilon}{2}$ for all
$f\in \bigcup_{i=0}^{N_0}\alpha^{-i}(\mathcal{G}).$ \hfill $(**)$

 For $0\leq k \leq N_0$ define $w_{k}=u^{-k}v(k/N_0)u^k,$ so
$w_{k}\in (q_{-k}+q_{N-k})B_{\{y\}}(q_{-k}+q_{N-k}).$

$\|uw_{k+1}u^*-w_{k}\|=\|v(k/N_0)-v((k-1))/N_0\|\leq 2\pi/
N_0<\frac{1}{2}\varepsilon$.

Now define $e_n=q_n$ for $0\leq n \leq N-N_0$, and for $N-N_0\leq
n\leq N$ write $k=N-n$ and set $e_n = w_k q_{-k}w_{k}^*.$ The two
definitions for $n=N-N_0$ agree because
$w_{N_{0}}q_{-N_{0}}w^*_{N_{0}}=q_{N-N_{0}},$ and moreover
$e_N=e_0.$ Therefore $\|ue_{n-1}u^*-e_{n}\|=0$ for $1\leq n \leq
N-N_0$, and also $ue_Nu^*=e_1,$ while for $N-N_0<n\leq N$ we have

$$\|ue_{n-1}u^*-e_n\|\leq2\|uw_{N-n+1}u^*-w_{N-n}\|<\varepsilon.$$
Also, clearly $e_{n}\in B_{\{y\}}$ for all $n$.

Set $e=\sum_{n=1}^{N}e_{n}$ and $p=1-e.$ We verify that $p$
satisfies (1) through (3).

First,
$$p-upu^*=ueu^*-e=\sum_{n=N_0+1}^N(ue_{n-1}u^*-e_n).$$
The terms in the sum are orthogonal and have norm less than
$\varepsilon$, so $\|upu^*-p\|<\varepsilon.$

Furthermore, $pup\in B_{\{y\}}.$

Next, let $g\in \mathcal{G}.$ The sets $U_0,U_1,\dots,U_N$ all have
diameter less than $\delta.$ We have $d(\sigma^N(y),y)<\delta,$ so
the choice of $\delta$ implies that
$d(\sigma^n(y),\sigma^{n-N}(y))<\delta_0$ for $N-N_0\leq n\leq N.$
Also, $U_{n-N}=\sigma^{n-N}(U_0)$ has diameter less than $\delta.$
Therefore $U_{n-N}\cup U_n$ has diameter less than
$2\delta+\delta_0\leq3\delta_0.$ Since $g$ varies by at most
$\frac{1}{4}\varepsilon$ on any set with diameter less than
$4\delta_0,$ and since the sets $
\sigma(Y),\sigma^2(Y),\sigma^{N-N_0-1}(Y),\sigma^{N-N_0}(Y)\cup
\sigma^{-N_0}(Y),\sigma^{N-N_0+1}(Y)\cup
\sigma^{-N_0+1}(Y),\\
\dots,\sigma^{N}(Y)\cup Y$ are disjoint.

For $0\leq n\leq N-N_0,$ we have $ge_n=gq_n=q_ng=e_ng.$

For $N-N_0<n\leq N$ and any $g\in \mathcal{G}$, we use $e_n\in
(q_{n-N}+q_n)B_{\{y\}}(q_{n-N}+q_n)$  and $(**)$ to get
$\|ge_n-e_ng\|=\|gw_k q_{-k}w_{k}^*-w_k
q_{-k}w_{k}^*g\|=\|w_{k}^*gw_k q_{-k}-
q_{-k}w_{k}^*gw_k\|<\varepsilon.$  It follows that
$\|pg-gp\|=\|ge-eg\|<\varepsilon.$ That $pgp\in B_{\{y\}}$ follows
from the fact that $g$ and $p$ are in this subalgebra. So we also
have (2) for $g$.

It remains only to verify (3). Let $\tau\in T(B_{\{y\}}),$ and let
$\mu$ be the corresponding $\sigma$-invariant probability measure on
$X$ by Lemma 3.3. We have

$$1-p=e\leq\sum_{n=-N_0}^N q_n,$$
so
$$\tau(1-p)\leq
\sum_{n=-N_0}^N\mu(\sigma^n(Y))=(N+N_0+1)\mu(Y)<\varepsilon.$$ This
completes the proof.
\end{proof}

We recall two results from Lemma 4.3 and Lemma 4.4 of \cite{H.Lin5}.

\begin{Lemma} Let $A$ be a  unital simple $C$*-algebra. Suppose that
for every $\varepsilon>0$ and every finite subset
$\mathcal{F}\subset A,$ there exists a unital $C$*-subalgebra
$B\subset A$ which has tracial rank zero and a projection $p\in B$
such that
$$\|pa-ap\|<\varepsilon \quad \mbox{and}\quad \mbox{dist}(pap,pBp)<\varepsilon$$
for all $a\in \mathcal{F}.$ Then $A$ has the local approximation
property of Popa \cite{S.Popa}, that is for every $\varepsilon>0$
and every finite subset $\mathcal{F}\subset A,$ there exists a
nonzero projection $q\in A$ and a finite dimensional unital
$C$*-subalgebra $D\subset qAq$ such that
$$\|qa-aq\|<\varepsilon\quad \mbox{and}\quad \mbox{dist}(qaq,D)<\varepsilon$$
for all $a\in \mathcal{F}.$
\end{Lemma}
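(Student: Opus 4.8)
The plan is to use the hypothesis to replace $A$ locally by a tracial-rank-zero subalgebra $B$, and then to apply the \emph{definition} of tracial rank zero a second time, inside a corner of $B$, in order to manufacture the finite-dimensional algebra $D$. First I would fix $\varepsilon>0$ and a finite set $\mathcal{F}\subset A$ and put $\eta=\varepsilon/5$. Applying the hypothesis with tolerance $\eta$ yields a unital simple $C$*-subalgebra $B\subset A$ with $\mathrm{TR}(B)=0$ and a nonzero projection $p\in B$ such that $\|pa-ap\|<\eta$ and $\mathrm{dist}(pap,pBp)<\eta$ for every $a\in\mathcal{F}$. For each such $a$ I would then choose $c_a\in pBp$ with $\|pap-c_a\|<\eta$ and set $\mathcal{G}=\{\,c_a:a\in\mathcal{F}\,\}\subset pBp$.

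The key structural observation is that the corner $pBp$ is again a unital simple $C$*-algebra of tracial rank zero, because tracial rank zero is inherited by hereditary subalgebras of simple $C$*-algebras. I would therefore feed $\mathcal{G}$, the tolerance $\eta$, and a small nonzero positive element $c\in pBp$ into Definition 2.1 applied to $pBp$. This produces a finite-dimensional $C$*-subalgebra $D\subset pBp$ with $\mathrm{id}_D=q$ satisfying $\|qc_a-c_aq\|<\eta$ and $qc_aq\in_\eta D$ for all $a$, together with $[p-q]\le[c]$. With $c$ chosen small and $p\ne 0$, this comparison clause forces $q\ne 0$. Since $q=\mathrm{id}_D\in pBp$ we have $q\le p$, hence $qp=pq=q$, and $D=qDq\subset qAq$ is a finite-dimensional unital subalgebra of the corner $qAq$ --- exactly the object Popa's property asks for.

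It then remains to transport the two approximate relations from $c_a$ back to the original $a$. Using $qp=pq=q$ one verifies the exact identity $qaq=q(pap)q$, whence $\mathrm{dist}(qaq,D)\le\|q(pap-c_a)q\|+\mathrm{dist}(qc_aq,D)<2\eta<\varepsilon$. For near-commutation I would write $qa-aq=q(pa-ap)+\big(q(pap)-(pap)q\big)+(pa-ap)q$, obtained by inserting the corner unit $p$ on each side and cancelling; the two outer terms are each bounded by $\|pa-ap\|<\eta$, while the middle term is bounded by $3\eta$ via the chain $q(pap)\approx qc_a\approx c_aq\approx(pap)q$. This gives $\|qa-aq\|<5\eta=\varepsilon$.

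The main obstacle is really just the careful bookkeeping of this two-level approximation: the projection $q$ sits below $p$ rather than directly inside $A$, so every statement about an original $a\in\mathcal{F}$ must be routed through the intermediate elements $pap$ and $c_a$, and one has to track how the tolerances accumulate --- which is why I fix $\eta=\varepsilon/5$ at the start. Conceptually the only nonformal ingredient is the permanence of tracial rank zero under passage to a corner, and the one genuinely essential use of Definition 2.1 is the comparison clause $[p-q]\le[c]$, which is what prevents the output projection $q$ from degenerating to zero.
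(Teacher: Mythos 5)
The paper offers no proof of this statement to compare yours against: it is imported verbatim as Lemma 4.3 of Lin--Phillips \cite{H.Lin5}. Your argument is the natural two-level approximation and, as far as I can check, it is correct: the identity $qa-aq=q(pa-ap)+\bigl(q(pap)-(pap)q\bigr)+(pa-ap)q$ is exact because $qp=pq=q$, the middle term is bounded by $3\eta$ through the chain $q(pap)\approx qc_a\approx c_aq\approx (pap)q$, and the $5\eta$ and $2\eta$ tallies come out right, as does $qaq=q(pap)q$. Two places deserve one more sentence each. First, the assertion that ``with $c$ chosen small the comparison clause forces $q\neq 0$'' is not automatic; you must actually exhibit a nonzero positive $c\in pBp$ with $[p]\not\leq[c]$. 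The standard route: $pBp$ is a unital simple $C$*-algebra of tracial rank zero (you should cite Lin's hereditary permanence result for corners of simple $\mathrm{TR}=0$ algebras rather than just assert it), hence has property (SP) and is finite, so one finds orthogonal, mutually equivalent nonzero projections $e_1\sim e_2\leq p$ and takes $c=e_1$; then $[p]\leq[e_1]$ would give $p\precsim e_1\leq p-e_2$, contradicting finiteness. Second, the hypothesis as printed does not literally exclude $p=0$ (both displayed inequalities hold trivially for $p=0$), so your insertion of ``nonzero'' is a tacit strengthening; it matches how the lemma is actually used (Lemma 3.4 supplies $\tau(1-p)<\varepsilon$, so $p\neq 0$), but it is worth stating explicitly since otherwise the lemma as written is false.
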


\begin{Lemma} Let $A$ be a  unital simple $C$*-algebra. Suppose that
for every finite subset $\mathcal{F}\subset A,$ every
$\varepsilon>0,$ and every nonzero positive element $c\in A,$ there
exists a projection $p\in A$ and a unital simple subalgebra
$B\subset pAp$ with tracial rank zero such that:

(1) $\|[a,p]\|<\varepsilon$ for all $a\in \mathcal{F}.$

(2) dist$(pap,B)<\varepsilon$ for all $a\in \mathcal{F}.$

(3) $1-p$ is Murray-von Neumann equivalent to a projection in
$\overline{cAc}.$\\
Then $A$ has tracial rank zero.
\end{Lemma}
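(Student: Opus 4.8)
The statement asserts that tracial rank zero is a \emph{local} property in which the local models are allowed to be unital simple subalgebras of tracial rank zero rather than finite dimensional ones; the plan is to run the local approximation argument one further level, applying Definition 2.1 inside the model algebra $B$ after invoking the hypothesis once. Fix $\varepsilon>0$, a finite set $\mathcal{F}\subset A$ of contractions, and a nonzero positive $c\in A$; I must produce a finite dimensional subalgebra $D\subset A$ with unit $q$ meeting (1)--(3) of Definition 2.1 for the data $(\varepsilon,\mathcal{F},c)$.

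Since $A$ is simple I would first split $c$, choosing orthogonal nonzero positive elements $c_1,c_2\in\overline{cAc}$ (for instance $c_i=g_i(c)$ for continuous $g_i$ with disjoint supports). Feeding $(\varepsilon_1,\mathcal{F},c_1)$ into the hypothesis, with $\varepsilon_1$ to be fixed small, gives a projection $p$ and a unital simple $B\subset pAp$ with $\mathrm{TR}(B)=0$ such that $\|[a,p]\|<\varepsilon_1$ and $\mathrm{dist}(pap,B)<\varepsilon_1$ for $a\in\mathcal{F}$, and with $1-p$ Murray--von Neumann equivalent to a projection $q_1\in\overline{c_1Ac_1}$. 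For each $a\in\mathcal{F}$ fix a self-adjoint $b_a\in B$ with $\|pap-b_a\|<\varepsilon_1$.

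Next I would exploit $\mathrm{TR}(B)=0$. The delicate preliminary is to manufacture the ``small'' positive element to feed into Definition 2.1 for $B$: using that $A$ is simple and that $B$, having tracial rank zero, is simple with real rank zero and divisible, one produces a nonzero positive $b_0\in B$ that is Cuntz subequivalent in $A$ to $c_2$, written $b_0\precsim c_2$. Applying Definition 2.1 to $B$ with data $(\varepsilon_2,\{b_a\},b_0)$ yields a finite dimensional $D\subset B$ with unit $q\le p$ satisfying $\|[b_a,q]\|<\varepsilon_2$, $qb_aq\in_{\varepsilon_2}D$, and a projection $r\in\overline{b_0Bb_0}$ with $p-q\sim r$ via a partial isometry in $B\subset A$. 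Conditions (1) and (2) for $A$ are then routine: with $q=qp=pq$, the estimates $\|[a,p]\|<\varepsilon_1$, $\|pap-b_a\|<\varepsilon_1$, $\|[q,b_a]\|<\varepsilon_2$ chain to give $\|[a,q]\|<4\varepsilon_1+\varepsilon_2$ and $qaq\in_{2\varepsilon_1+\varepsilon_2}D$, so small $\varepsilon_1,\varepsilon_2$ suffice. For (3) write $1-q=(1-p)+(p-q)$, an orthogonal sum of projections; the first summand is equivalent to $q_1\in\overline{c_1Ac_1}$, while $r\precsim b_0\precsim c_2$ forces the projection $p-q\sim r$ to be equivalent in $A$ to a projection $q_2\in\overline{c_2Ac_2}$, since a projection Cuntz-below a positive element sits, up to Murray--von Neumann equivalence, inside its hereditary subalgebra. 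As $c_1\perp c_2$, the $q_i$ are orthogonal and $q_1+q_2\in\overline{cAc}$ is a projection equivalent to $1-q$, giving $[1-q]\le[c]$; hence $\mathrm{TR}(A)=0$.

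The principal obstacle is the comparison underlying condition (3): producing the nonzero positive $b_0\in B$ with $b_0\precsim c_2$ in $A$, and thereby transporting the $B$-internal subequivalence $p-q\precsim b_0$ to a genuine Murray--von Neumann equivalence with a projection of $\overline{c_2Ac_2}$. This is where the simplicity of $A$, the (SP) property and divisibility coming from $\mathrm{TR}(B)=0$, and the standard fact relating Cuntz-below projections to hereditary subalgebras must be combined; the remaining ingredients (the splitting of $c$, the commutator and distance estimates, and the bookkeeping of $\varepsilon_1,\varepsilon_2$) are routine. One could alternatively obtain (1) and (2) by quoting the local approximation property of Lemma 3.5, but the size control on $1-q$ in (3) is the new content and must be tracked through the subalgebra $B$ directly.
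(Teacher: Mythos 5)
First, a point of reference: the paper gives no proof of this statement at all --- it is quoted verbatim (together with Lemma 3.5) as Lemma 4.4 of \cite{H.Lin5}, so there is no in-paper argument to compare yours against. Judged on its own, your reconstruction follows what is in fact the standard route: feed the hypothesis once to get the local model $B$ with $\mathrm{TR}(B)=0$, then run Definition 2.1 \emph{inside} $B$, and control $1-q=(1-p)+(p-q)$ by splitting the comparison element into two orthogonal pieces, one for each summand. The bookkeeping for conditions (1) and (2), the orthogonal-sum argument for (3), and the observation that a projection Cuntz-below a positive element is Murray--von Neumann equivalent to a projection in its hereditary subalgebra are all correct.

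The one substantive step is the one you flag but do not carry out: producing a nonzero positive $b_0\in B$ (equivalently, a nonzero projection of $B$) with $b_0\precsim c_2$ \emph{in} $A$. This is not automatic from ``simplicity plus divisibility'': without it the $B$-internal estimate $p-q\precsim b_0$ cannot be exported to $\overline{c_2Ac_2}$, and the whole of condition (3) collapses. The standard way to fill it is first to establish property (SP) for $A$ itself --- via Lemma 3.5, Popa's theorem \cite{S.Popa} and Lemma 2.12 of \cite{H.Lin6}, exactly as is done at the start of the proof of Theorem 3.7 --- and then to invoke the comparison lemma for a unital simple $C$*-subalgebra with (SP) of a simple $C$*-algebra with (SP) (Section 3.5 of \cite{H.Lin2}); note that your sketch never explains where (SP) for $A$ comes from, and you relegate Lemma 3.5 to a side remark about (1) and (2) when it is in fact needed for (3). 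A second, minor, slip: the splitting $c_i=g_i(c)$ with $g_1,g_2$ of disjoint supports fails when $c$ is a scalar multiple of a projection; one should instead use (SP) to extract a nonzero projection $e\in\overline{cAc}$ and halve it into orthogonal nonzero projections $e_1,e_2$ (again using (SP) and simplicity), which is how the argument of \cite{H.Lin5} proceeds. With those two repairs --- both citable, neither invented here --- your outline becomes a complete proof.
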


\begin{Theorem} Let $X$ be a Cantor set, and let $A$ be a
unital separable simple amenable $C$*-algebra with tracial rank zero which
satisfies the UCT. Let $C(X,A)$ denote all continuous functions from
$X$ to $A$ and  $\alpha$ be an automorphism of $C(X,A)$. Suppose
that $C(X,A)$ is $\alpha$-simple and $[\alpha]=[\mbox{id}_{1\otimes A}]$ in $KL(1\otimes A,1\otimes A)$. Then
$C(X,A)\rtimes_{\alpha}\mathbb{Z}$ is a unital simple $C$*-algebra
with tracial rank zero.
\end{Theorem}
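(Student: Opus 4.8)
The plan is to verify the three hypotheses of Lemma 3.7 for the algebra $B = C(X,A) \rtimes_\alpha \mathbb{Z}$, taking the subalgebra $B_{\{y\}}$ as the tracial-rank-zero ingredient. First I would establish simplicity of $B$ itself: since $C(X,A)$ is $\alpha$-simple, Lemma 2.4 gives immediately that $C(X,A)\rtimes_\alpha\mathbb{Z}$ is simple, so the hypothesis ``$A$ is a unital simple $C$*-algebra'' in Lemmas 3.6 and 3.7 applies to $B$. The strategy is then not to attack $B$ directly, but to route everything through the canonical subalgebra $B_{\{y\}}$ for a fixed $y\in X$, which has already been analyzed in Lemmas 3.1 through 3.4.

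Next I would assemble the pieces coming from the earlier lemmas. By Lemma 3.2, $B_{\{y\}}$ has tracial rank zero; this is precisely the ``unital $C$*-subalgebra of tracial rank zero'' required by Lemma 3.6. The key approximation step is Lemma 3.5: for any $\varepsilon > 0$ and finite $\mathcal{F}\subset B$, it produces a projection $p\in B_{\{y\}}$ that almost commutes with $\mathcal{F}$, satisfies $pap\in pB_{\{y\}}p$, and has $\tau(1-p)<\varepsilon$ for all $\tau\in T(B_{\{y\}})$. Conditions (1) and (2) of Lemma 3.5 give exactly the almost-commutation and approximate-containment hypotheses of Lemma 3.6, so I would first invoke Lemma 3.6 to conclude that $B$ has Popa's local approximation property. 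This delivers, for every $\varepsilon>0$ and finite $\mathcal{F}$, a nonzero projection $q\in B$ and a finite dimensional unital $C$*-subalgebra $D\subset qBq$ with the required almost-commutation and approximation.

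To finish via Lemma 3.7, the remaining work is to upgrade the trace estimate (3) of Lemma 3.5 into a comparison-of-projections statement: namely, that $1-p$ is Murray--von Neumann equivalent to a projection in $\overline{cBc}$ for an arbitrary nonzero positive $c\in B$. Here I would use that $B$ is simple with a suitable comparison property. Since $A$ has tracial rank zero, is $\mathcal{Z}$-absorbing, and by Lemma 3.2 so is $B_{\{y\}}$, strict comparison holds, so the smallness of $\tau(1-p)$ uniformly over traces translates into the desired subequivalence against $\overline{cBc}$; one chooses $\varepsilon$ smaller than $\inf_\tau \tau(d)$ for an appropriate projection $d\in \overline{cBc}$ obtained from $c$. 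With Popa's property established and this comparison in hand, the hypotheses (1)--(3) of Lemma 3.7 are met, and I would conclude that $B = C(X,A)\rtimes_\alpha\mathbb{Z}$ has tracial rank zero.

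I expect the main obstacle to be the comparison step, that is, passing from the trace bound $\tau(1-p)<\varepsilon$ on $T(B_{\{y\}})$ to an honest Murray--von Neumann subequivalence $1-p \precsim d$ in $B$ for projections coming from the enveloping algebra rather than from $B_{\{y\}}$. The subtlety is that the traces controlled in Lemma 3.5 are those of $B_{\{y\}}$, whereas Lemma 3.7 demands a comparison inside $B$; reconciling these requires either the affine restriction map $r$ relating $T(B)$ and $T^\alpha(C(X,A))$, or a direct appeal to $\mathcal{Z}$-stability and strict comparison to make the argument independent of which algebra's traces are used. Once this translation is carried out carefully, the rest is routine bookkeeping with the cited lemmas.
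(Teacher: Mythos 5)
Your overall architecture coincides with the paper's: simplicity of $B$ from Lemma 2.4, $\mathrm{TR}(B_{\{y\}})=0$ from Lemma 3.2, the almost-commuting projection $p\in B_{\{y\}}$ from Lemma 3.4 (your ``Lemma 3.5''), Popa's local approximation property via Lemma 3.5, and the tracial-rank-zero criterion of Lemma 3.6 (your ``Lemma 3.7''). Two remarks on the parts you treat lightly: the role of Popa's property in the paper is specifically to yield, together with Lemma 2.12 of \cite{H.Lin6}, property (SP) for $B$, which is what guarantees the existence of a nonzero projection $e\in\overline{cBc}$ in the first place; and the passage from traces on $B_{\{y\}}$ to traces on $B$ is actually free, since every $\tau\in T(B)$ restricts to a trace on the unital subalgebra $B_{\{y\}}$.

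The genuine gap is the comparison step, which you correctly flag as the main obstacle but do not close. Your proposed appeal to strict comparison in $B$ via $\mathcal{Z}$-stability is circular: only $B_{\{y\}}$ is known to be $\mathcal{Z}$-absorbing (Lemma 3.2), whereas $B$ is exactly the algebra whose regularity is in question, so no comparison property of $B$ is available at this stage; and the restriction map of traces alone does not convert a trace inequality into a Murray--von Neumann subequivalence. The paper's resolution is to move the comparison entirely inside $B_{\{y\}}$: with $\delta_0=\frac{1}{18}\inf_{\tau\in T(B)}\tau(e)$, apply Lemma 3.4 to get $q\in B_{\{y\}}$ almost commuting with $e$ and with $qeq$ close to $qB_{\{y\}}q$; functional calculus then produces a nonzero projection $e_1\in qB_{\{y\}}q$ and a projection $e_2\in(1-q)B(1-q)$ with $\|e_1+e_2-e\|<18\delta_0\leq 1$, whence $e_1\precsim e$ in $B$ and $\tau(e_1)>0$. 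Setting $\varepsilon_0=\inf_{\tau\in T(B_{\{y\}})}\tau(e_1)$ and rerunning Lemma 3.4 with this tolerance, one gets $\tau(1-p)<\varepsilon_0\leq\tau(e_1)$ for all $\tau\in T(B_{\{y\}})$, and because $B_{\{y\}}$ has tracial rank zero the order on its projections is determined by traces, giving $1-p\precsim e_1\precsim e$ in $B$. This intermediate projection $e_1$, pulled into $B_{\{y\}}$ so that trace-determined comparison applies, is the missing idea in your proposal.
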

\begin{proof} Let $B=C(X,A)\rtimes_{\alpha}\mathbb{Z},$ then
$B$ is unital simple by Lemma 2.4.

We verify the conditions of Lemma 3.6.

Let
$B_{\{y\}}=C^*(C(X,A),uC_0(X\backslash \{y\},A))$ for any given
point $y\in X.$ So $B_{\{y\}}$ has tracial rank zero by Lemma 3.2.
Thus, let $\mathcal{F}\subset B$ be a finite subset, let
$\varepsilon>0,$ and let $c\in B$ be a nonzero positive element.
Beyond Lemma 3.4, the main step of the proof is to find a nonzero
projection in $B_{\{y\}}$ which is Murray-von Neumann equivalent to
a projection in $\overline{cBc}.$

 The algebra $B$ is simple, so Lemma 3.4, Lemma3.5 and Lemma 2.12 of
 \cite{H.Lin6} imply that $B$ has property (SP). Therefore there is
 a nonzero projection $e\in \overline{cBc}.$\\
 Set
$$\delta_0=\frac{1}{18}\inf_{\tau\in T(B)}\tau(e)\leq \frac{1}{18}.$$
By Lemma 3.4, there is a projection $q\in B_{\{y\}}$ and an element
$b_0\in qB_{\{y\}}q$ such that
$$\|qe-eq\|<\delta_0,\quad \|qeq-b_0\|<\delta_0,\quad\mbox{and}\quad \sup_{\tau\in T(B_{\{y\}})}\tau(1-q)\leq \delta_0.$$
Then $\sup_{\tau\in T(B)}\tau(1-q)\leq \sup_{\tau\in
T(B_{\{y\}})}\tau(1-q)\leq \delta_0.$\\
Replacing $b_0$ by $\frac{1}{2}(b_0+b_0^*),$ we may assume that
$b_0$ is self-adjoint. We have $-\delta_0\leq b_0\leq 1+\delta_0,$ so
applying continuous functional calculus  we may find $b\in
qB_{\{y\}}q$ such that $0\leq b\leq1$ and $\|qeq-b\|<2\delta_0.$
Using $\|qe-eq\|<\delta_0$ on the last term in the second
expression, we get
$$\|b^2-b\|\leq3\|b-qeq\|+\|(qeq)^2-qeq\|<3\cdot2\delta_0+\delta_0=7\delta_0<\frac{1}{4}.$$
Therefore there is a projection $e_1\in qB_{\{y\}}q$ such that
$\|e_1-b\|<14\delta_0,$ giving $\|e_1-qeq\|<16\delta_0.$ Similarly (
actually, one gets a better estimate ) there is a projection $e_2\in
(1-q)B(1-q)$ such that $\|e_2-(1-q)e(1-q)\|<16\delta_0.$ Therefore
$\|e_1+e_2-[qeq+(1-q)e(1-q)]\|<16\delta_0$ and, using
$\|qe-eq\|<\delta_0$ again, we have $\|e_1+e_2-e\|<18\delta_0\leq1.$
It follows that $e_1\precsim e.$ Also, for $\tau\in T(B),$ we have
\begin{eqnarray*}\tau(e_1)&>&\tau(qeq)-16\delta_0=\tau(e)-\tau((1-q)e(1-q))-16\delta_0\\
&\geq&\tau(e)-\tau(1-q)-16\delta_0>0,\end{eqnarray*} so $e_1\neq 0.$

Now set $\varepsilon_0=\inf(\{\tau(e_1): \tau\in T(B_{\{y\}})\}).$
By Lemma 3.4, there is a projection $p\in B_{\{y\}}$ such that:

(1) $\|pa-ap\|<\varepsilon$ for all $a\in \mathcal{F}.$

(2) $pap\in pB_{\{y\}}p$ for all $a\in \mathcal{F}.$

(3) $\tau(1-p)<\varepsilon_0$ for all $\tau \in T(B_{\{y\}}).$\\
Since $B_{\{y\}}$ has tracial rank zero which implies that the order
on projections is determined by traces, it follows that $1-p\precsim
e_1 \precsim e.$ Since $pB_{\{y\}}p$ also has tracial rank zero, we
have verified the hypotheses of Lemma 3.6. Thus $B$ has tracial rank
zero.

\end{proof}

\begin{Cor} For j=1,2, let $X$ be a Cantor set, and let $A_j$ be a
unital separable simple amenable $C$*-algebra with tracial rank zero which
satisfy the UCT. Let $C(X,A_j)$ denote all continuous functions from
$X$ to $A_j$, and let $\alpha_j$ be an automorphism of $C(X,A_j)$. Suppose
that $C(X,A_j)$ is $\alpha_j$-simple and $[\alpha_j]=[\mbox{id}_{1\otimes A_j}]$ in $KL(1\otimes A_j,1\otimes A_j)$. Let
$B_j=C(X,A_j)\rtimes_{\alpha_j}\mathbb{Z}$ for $j=1,2.$ Then $B_1\cong
B_2$ if and only if
$$(K_0(B_1), K_0(B_1)_+,[1_{B_1}],K_1(B_1))\cong (K_0(B_2), K_0(B_2)_+,[1_{B_2}],K_1(B_2)).$$
\end{Cor}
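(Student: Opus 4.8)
The plan is to derive the corollary as a direct application of the Elliott classification program, using the main theorem (Theorem~3.8) to place both crossed products $B_1$ and $B_2$ in a classifiable category. First I would record that, by Theorem~3.8, each $B_j = C(X,A_j)\rtimes_{\alpha_j}\mathbb{Z}$ is a unital simple $C$*-algebra with tracial rank zero. The ``only if'' direction is immediate: any $*$-isomorphism $B_1 \cong B_2$ induces an isomorphism of the ordered $K_0$-groups carrying $[1_{B_1}]$ to $[1_{B_2}]$, and an isomorphism of the $K_1$-groups, so the stated invariant is preserved.

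For the substantive ``if'' direction, the strategy is to invoke the Lin--Elliott classification theorem for unital separable simple nuclear $C$*-algebras with tracial rank zero satisfying the UCT, which asserts that such algebras are classified up to isomorphism by the quadruple $(K_0, (K_0)_+, [1], K_1)$. Thus the two remaining hypotheses I must verify for each $B_j$ are separability together with nuclearity (amenability), and the UCT. Separability is inherited since $X$ is a Cantor set (hence second countable) and each $A_j$ is separable, so $C(X,A_j)$ is separable and the crossed product by $\mathbb{Z}$ remains separable. Nuclearity of $C(X,A_j) = C(X)\otimes A_j$ follows from nuclearity of $C(X)$ and the amenability of $A_j$, and nuclearity is preserved under taking crossed products by the amenable group $\mathbb{Z}$; hence each $B_j$ is nuclear.

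The one genuinely technical point, and the step I expect to be the main obstacle, is verifying that each $B_j$ satisfies the UCT. Here I would use that $A_j$ satisfies the UCT by hypothesis, that $C(X)$ satisfies the UCT (being commutative, indeed a commutative AF-type algebra over the Cantor set), and that the UCT class is closed under tensor products; thus $C(X,A_j) = C(X)\otimes A_j$ lies in the bootstrap class. For the crossed product, I would appeal to the stability of the UCT class under crossed products by $\mathbb{Z}$, via the Pimsner--Voiculescu six-term exact sequence and the two-out-of-three property of the bootstrap class: the mapping-torus description identifies $B_j$ within the category of $C$*-algebras $KK$-equivalent to commutative ones, so $B_j$ satisfies the UCT.

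With separability, nuclearity, the UCT, and $\mathrm{TR}(B_j)=0$ all established, both $B_1$ and $B_2$ fall within the scope of the classification theorem. The assumed isomorphism of invariants $(K_0(B_1), K_0(B_1)_+, [1_{B_1}], K_1(B_1)) \cong (K_0(B_2), K_0(B_2)_+, [1_{B_2}], K_1(B_2))$ then lifts to a $*$-isomorphism $B_1 \cong B_2$, completing the proof.
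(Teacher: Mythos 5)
Your argument is correct and follows the same route as the paper, which simply cites Lin's classification theorem for unital separable simple nuclear $C$*-algebras with tracial rank zero satisfying the UCT (Theorem 5.2 of the Duke Math.\ J.\ paper of Lin) together with the main theorem (Theorem 3.7 here, not 3.8). Your explicit verification of separability, nuclearity, and the UCT for the crossed products is exactly the routine checking the paper leaves implicit in the word ``immediate.''
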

\begin{proof} This is immediate from Theorem 5.2 of \cite{H.Lin7}
and Theorem 3.7.
\end{proof}

\

\

{\scshape Department of Mathematics, Tongji University,
Shanghai 200092, P.R.CHINA.}

{\it E-mail address}: huajiajie2006@hotmail.com
\end{document}